\title{An Algebraic Approach\\ to Multidimensional Minimax Location Problems\\ with Chebyshev Distance\thanks{WSEAS Transactions on Mathematics, 2011. Vol.~10, no.~6, pp.~191-200.}
}
\author{Nikolai Krivulin\thanks{Faculty of Mathematics and Mechanics, St.~Petersburg State University, 28 Universitetsky Ave., St.~Petersburg, 198504, Russia, 
nkk@math.spbu.ru}
}
\date{}
\newtheorem{lemma}{Lemma}
\newtheorem{corollary}{Corollary}
\begin{document}

\maketitle

\begin{abstract}
Minimax single facility location problems in multidimensional space with Chebyshev distance are examined within the framework of idempotent algebra. The aim of the study is twofold: first, to give a new algebraic solution to the location problems, and second, to extend the area of application of idempotent algebra. A new algebraic approach based on investigation of extremal properties of eigenvalues for irreducible matrices is developed to solve multidimensional problems that involve minimization of functionals defined on idempotent vector semimodules. Furthermore, an unconstrained location problem is considered and then represented in the idempotent algebra settings. A new algebraic solution is given that reduces the problem to evaluation of the eigenvalue and eigenvectors of an appropriate matrix. Finally, the solution is extended to solve a constrained location problem. 
\\

\textit{Key-Words:} single facility location problem, Chebyshev distance, idempotent semifield, eigenvalue, eigenvector
\end{abstract}

\section{Introduction}

Location problems \cite{Eiselt11Pioneering} form one of the classical research domains in optimization that has its origin dating back to XVIIth century and classical works by P.~Fermat, E.~Torricelli, J.~J.~Sylvester, J.~Steiner, and A.~Weber. Over many years a large body of research on this topic contributed to the development in various areas including integer programming, combinatorial and graph optimization (see, e.g. \cite{Elzinga72Geometrical,Hansen81Constrained,Sule01Logistics,Moradi09Single,Eiselt11Pioneering,Drezner11Continuous}).

Among other solution approaches to location problems are models and methods of idempotent algebra \cite{Baccelli92Synchronization,CuninghameGreen94Minimax,Kolokoltsov97Idempotent,Golan03Semirings,Heidergott05Maxplus,Butkovic10Maxlinear}, which find expanding applications in the analysis of actual problems in engineering, manufacturing, information technology, and other fields. Expressed in terms of idempotent algebra, a range of problems that are nonlinear in the ordinary sense, become linear and so allow more simple analysis and solution techniques. Specifically, many classical problems in graph optimization and dynamic programming reduce to solving linear vector equations, finding eigenvalues and eigenvectors of matrices, and to similar computational procedures. 

A single facility one-dimensional location problem on a graph is examined in \cite{CuninghameGreen91Minimax,CuninghameGreen94Minimax}, where it is turned into a problem of minimizing a rational function in the idempotent algebra sense. However, the proposed solution deals with polynomial and rational functions of one variable, and becomes less or no applicable in the multidimensional case.

In \cite{Zimmermann03Disjunctive,Tharwat08Oneclass}, a multidimensional constrained location problem on a graph is reduced to minimization of a max-separable objective function that can be represented as a maximum of functions each depending only on one variable. An efficient computational procedure is proposed which, however, seems to have limited application only to location problems where the objective function appears to be max-separable. 

In this paper, we further develop the algebraic approach proposed in \cite{Krivulin11Algebraic,Krivulin11Anextremal,Krivulin11Algebraicsolution}. We consider a multidimensional minimax single facility location problem with Chebyshev distance, and show how the problem can be solved based on new results in the spectral theory of matrices in idempotent algebra. The aim of the paper is twofold: first, to give a new algebraic solution to the location problem, and second, to extend the area of application of idempotent algebra.

The rest of the paper is as follows. We begin with an overview of preliminary definitions and results in idempotent algebra, including basic concepts of scalar and matrix algebra, and elements of the spectral theory of matrices. Furthermore, a new algebraic approach based on investigation of extremal properties of eigenvalues for irreducible matrices is developed to solve multidimensional problems that involve minimization of functionals defined on idempotent vector semimodules.

We examine an unconstrained minimax location problem and represent it in terms of idempotent algebra. A new solution is given that reduces the problem to evaluation of the eigenvalue and eigenvectors of an irreducible matrix. Finally, the solution is extended to solve a constrained location problem.

\section{Preliminary Results}

We start with a brief overview of definitions, notation and preliminary results of idempotent algebra that underlie the solution approach developed in subsequent sections. Further details can be found in \cite{Baccelli92Synchronization,Kolokoltsov97Idempotent,CuninghameGreen94Minimax,Golan03Semirings,Heidergott05Maxplus,Butkovic10Maxlinear}.

\subsection{Idempotent Semifield}

Let $\mathbb{X}$ be a set with two operations, addition $\oplus$ and multiplication $\otimes$, and their respective neutral elements, zero $\mathbb{0}$ and identity $\mathbb{1}$. We suppose that  $(\mathbb{X},\mathbb{0},\mathbb{1},\oplus,\otimes)$ is a commutative semiring where addition is idempotent and multiplication is invertible. Since the nonzero elements of the semiring form a group under multiplication, the semiring is usually referred to as idempotent semifield. 

The integer power is defined in the ordinary way. Let us put $\mathbb{X}_{+}=\mathbb{X}\setminus\{\mathbb{0}\}$. For any $x\in\mathbb{X}_{+}$ and integer $p>0$, we have $x^{0}=\mathbb{1}$, $\mathbb{0}^{p}=\mathbb{0}$, and
$$
x^{p}=x^{p-1}\otimes x=x\otimes x^{p-1},
\qquad
x^{-p}=(x^{-1})^{p}.
$$

We assume that the integer power can naturally be extended to the case of rational and real exponents.

In what follows, we omit, as is customary, the multiplication sign $\otimes$. The power notation is used in the sense of idempotent algebra.

The idempotent addition allows one to define a relation of partial order $\leq$ such that $x\leq y$ if and only if $x\oplus y=y$. From the definition it follows that
$$
x\leq x\oplus y,
\qquad
y\leq x\oplus y,
$$
as well as that the addition and multiplication are both isotonic. Below the relation symbols and the operator $\min$ are thought of as referring to this partial order.

It is easy to verify that the binomial identity now takes the form
$$
(x\oplus y)^{\alpha}=x^{\alpha}\oplus y^{\alpha}
$$
for all real $\alpha\geq0$.

As an example, one can consider the idempotent semifield of real numbers
$$
\mathbb{R}_{\max,+}
=
(\mathbb{R}\cup\{-\infty\},-\infty,0,\max,+).
$$

In the semifield $\mathbb{R}_{\max,+}$, there are the null and identity elements defined as $\mathbb{0}=-\infty$ and $\mathbb{1}=0$. For each $x\in\mathbb{R}$, there exists its inverse $x^{-1}$ equal to the opposite number $-x$ in conventional arithmetic. For any $x,y\in\mathbb{R}$, the power $x^{y}$ corresponds to the arithmetic product $xy$. The partial order induced by the idempotent addition coincides with the natural linear order defined on $\mathbb{R}$.

\subsection{Vectors and Matrices}

Vector and matrix operations are routinely introduced based on the scalar addition and multiplication defined on $\mathbb{X}$. Consider the Cartesian power $\mathbb{X}^{n}$ with its elements represented as column vectors. For any two vectors $\bm{x}=(x_{i})$ and $\bm{y}=(y_{i})$, and a scalar $c\in\mathbb{X}$, vector addition and multiplication by scalars follow the rules
$$
\{\bm{x}\oplus\bm{y}\}_{i}
=
x_{i}\oplus y_{i},
\qquad
\{c\bm{x}\}_{i}
=
cx_{i}.
$$ 

A vector with all zero elements is referred to as zero vector and denoted by $\mathbb{0}$.

The set $\mathbb{X}^{n}$ with these operations is a vector semimodule over the idempotent semifield $\mathbb{X}$.

A vector $\bm{y}\in\mathbb{X}^{n}$ is linearly dependent on vectors $\bm{x}_{1},\ldots,\bm{x}_{m}\in\mathbb{X}^{n}$, if there are scalars $c_{1},\ldots,c_{m}\in\mathbb{X}$ such that
$$
\bm{y}
=
c_{1}\bm{x}_{1}\oplus\cdots\oplus c_{m}\bm{x}_{m}.
$$

In particular, the vector $\bm{y}$ is collinear with $\bm{x}$, if $\bm{y}=c\bm{x}$. The zero vector is dependent on any vector.

For any column vector $\bm{x}=(x_{i})\in\mathbb{X}_{+}^{n}$, we define a row vector $\bm{x}^{-}=(x_{i}^{-})$ with its elements $x_{i}^{-}=x_{i}^{-1}$. For all $\bm{x},\bm{y}\in\mathbb{X}_{+}^{n}$, the componentwise inequality $\bm{x}\leq\bm{y}$ implies $\bm{x}^{-}\geq\bm{y}^{-}$.

For conforming matrices $A=(a_{ij})$, $B=(b_{ij})$, and $C=(c_{ij})$, matrix addition and multiplication together with  multiplication by a scalar $c\in\mathbb{X}$ are performed according to the formulas
\begin{gather*}
\{A\oplus B\}_{ij}
=
a_{ij}\oplus b_{ij},
\qquad
\{B C\}_{ij}
=
\bigoplus_{k}b_{ik}c_{kj},
\\
\{cA\}_{ij}=ca_{ij}.
\end{gather*}

A matrix with all zero entries is a zero matrix which is denoted by $\mathbb{0}$.

Consider the set of square matrices $\mathbb{X}^{n\times n}$. The matrix that has all diagonal entries equal to $\mathbb{1}$ and off-diagonal entries equal to $\mathbb{0}$ is an identity matrix denoted by $I$.

With respect to matrix addition and multiplication, the set $\mathbb{X}^{n\times n}$ forms idempotent semiring with identity.

For any matrix $A\ne\mathbb{0}$ and an integer $p>0$, the power notation is routinely defined as
$$
A^{0}=I,
\qquad
A^{p}=A^{p-1}A=AA^{p-1}.
$$

The trace of the matrix $A=(a_{ij})$ is calculated as
$$
\mathop\mathrm{tr}A=\bigoplus_{i=1}^{n}a_{ii}.
$$

A matrix is called irreducible if and only if it cannot be put in a block triangular form by simultaneous permutations of rows and columns. Otherwise the matrix is reducible.

\subsection{Eigenvalues and Eigenvectors}

A scalar $\lambda\in\mathbb{X}$ is eigenvalue of a matrix $A\in\mathbb{X}^{n\times n}$ if there exists a nonzero vector $\bm{x}\in\mathbb{X}^{n}$ such that
$$
A\bm{x}=\lambda\bm{x}.
$$
Any vector $\bm{x}\ne\mathbb{0}$ that satisfies the above equality is an eigenvector of $A$, corresponding to $\lambda$.

If the matrix $A$ is irreducible, then it has only one eigenvalue given by
\begin{equation}
\lambda
=
\bigoplus_{m=1}^{n}\mathop\mathrm{tr}\nolimits^{1/m}(A^{m}).
\label{E-lambda}
\end{equation}

The corresponding eigenvectors of $A$ have no zero entries and are found as follows. First we evaluate the matrix
$$
A^{\times}
=
\lambda^{-1}A\oplus\cdots\oplus(\lambda^{-1}A)^{n}.
$$

Let $\bm{a}_{i}^{\times}$ be column $i$ in $A^{\times}$, and $a_{ii}^{\times}$ be its diagonal element. Now each column $\bm{a}_{i}^{\times}$ is replaced with that defined as 
$$
\bm{a}_{i}^{+}
=
\begin{cases}
\bm{a}_{i}^{\times}, & \text{if $a_{ii}^{\times}=\mathbb{1}$},
\\
\mathbb{0}, & \text{otherwise}.
\end{cases}
$$

Furthermore, the set of columns $\bm{a}_{i}^{+}$ is reduced by removing those columns, if any, that are linearly dependent on others. Finally, the rest columns are put together to form a matrix $A^{+}$.

The set of all eigenvectors of $A$ corresponding to $\lambda$ (together with zero vector) coincides with the linear span of the columns of $A^{+}$, whereas each vector takes the form
$$
\bm{x}
=
A^{+}\bm{v},
$$
where $\bm{v}$ is a nonzero vector of appropriate size.

\section{Extremal Property of Eigenvalues}

Suppose $A\in\mathbb{X}^{n\times n}$ is an irreducible matrix with an eigenvalue $\lambda$. For each $\bm{x}\in\mathbb{X}_{+}^{n}$ consider a function
$$
\varphi(\bm{x})
=
\bm{x}^{-}A\bm{x}.
$$

It has been shown in \cite{Krivulin05Evaluation,Krivulin06Eigenvalues} that $\varphi(\bm{x})$ has a minimum equal to $\lambda$ and attained at any eigenvector of $A$.

Now we improve this result by extending the set of vectors that provide the minimum of $\varphi(\bm{x})$.

First we revise the above result as follows.
\begin{lemma}
\label{L-xAxlambda}
Let $A=(a_{ij})\in\mathbb{X}^{n\times n}$ be an irreducible matrix with an eigenvalue $\lambda$. Suppose $\bm{u}=(u_{i})$ and $\bm{v}=(v_{i})$ are eigenvectors of the respective  matrices $A$ and $A^{T}$. Then it holds that
\begin{equation}
\min_{\bm{x}\in\mathbb{X}_{+}^{n}}\bm{x}^{-} A\bm{x}
=
\lambda,
\label{E-xAxlambda}
\end{equation}
where the minimum is attained at $\bm{u}$ and $(\bm{v}^{-})^{T}$.
\end{lemma}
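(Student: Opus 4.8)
The plan is to establish the equality $\min_{\bm{x}}\bm{x}^{-}A\bm{x}=\lambda$ in two directions: a lower bound $\varphi(\bm{x})\geq\lambda$ valid for all $\bm{x}\in\mathbb{X}_{+}^{n}$, and then a verification that this bound is actually attained at the two specific vectors $\bm{u}$ and $(\bm{v}^{-})^{T}$. Since the paper already cites that the minimum of $\varphi$ equals $\lambda$ and is attained at any eigenvector of $A$, the value of $\bm{u}$ is essentially handed to me: substituting $A\bm{u}=\lambda\bm{u}$ gives $\bm{u}^{-}A\bm{u}=\bm{u}^{-}(\lambda\bm{u})=\lambda(\bm{u}^{-}\bm{u})=\lambda\mathbb{1}=\lambda$, using that $\bm{u}^{-}\bm{u}=\bigoplus_i u_i^{-1}u_i=\mathbb{1}$. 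So the genuinely new content of the lemma is the claim about $(\bm{v}^{-})^{T}$, the entrywise inverse of a left eigenvector.

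First I would record the lower bound. For arbitrary $\bm{x}\in\mathbb{X}_{+}^{n}$ one has $\varphi(\bm{x})=\bm{x}^{-}A\bm{x}=\bigoplus_{i,j}x_i^{-1}a_{ij}x_j$. The cleanest route to $\varphi(\bm{x})\geq\lambda$ is to use the trace formula \eqref{E-lambda}: for each cyclic product $a_{i_1 i_2}a_{i_2 i_3}\cdots a_{i_m i_1}$ appearing in $\operatorname{tr}(A^m)$, the telescoping of the factors $x_{i_k}^{-1}x_{i_{k+1}}$ collapses to $\mathbb{1}$, so the same cyclic product appears as a term bounded above by $\varphi(\bm{x})^m$; taking the $1/m$ power and the $\oplus$ over $m$ and over cycles recovers $\lambda\leq\varphi(\bm{x})$. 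This part is independent of which $\bm{x}$ we use and gives the inequality $\min\varphi\geq\lambda$.

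Second, for the attainment at $(\bm{v}^{-})^{T}$, I would exploit the symmetry $\varphi_A(\bm{x})=\bm{x}^{-}A\bm{x}$ versus $\varphi_{A^{T}}$. The key observation is that for $\bm{x}=(\bm{v}^{-})^{T}$ interpreted as the column vector $\bm{v}^{-}$ with entries $v_i^{-1}$, one has $(\bm{v}^{-})^{-}=\bm{v}$ as a row vector, so $\varphi(\bm{v}^{-})=\bm{v}^{T}A\bm{v}^{-}$ in coordinates equals $\bigoplus_{i,j} v_i\, a_{ij}\, v_j^{-1}$. Transposing the scalar sum, this is exactly $\bigoplus_{i,j} v_j\,a_{ij}\,v_i^{-1}=\bm{v}^{-}A^{T}\bm{v}=\varphi_{A^{T}}(\bm{v})$ up to relabeling, and since $\bm{v}$ is an eigenvector of $A^{T}$ with the \emph{same} eigenvalue $\lambda$ (an irreducible matrix and its transpose share their unique eigenvalue, as the trace formula \eqref{E-lambda} is transpose-invariant because $\operatorname{tr}(A^m)=\operatorname{tr}((A^{T})^m)$), the eigenvector argument applied to $A^{T}$ yields $\varphi_{A^{T}}(\bm{v})=\lambda$.

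The step I expect to be the main obstacle is the index bookkeeping that identifies $\varphi_A(\bm{v}^{-})$ with $\varphi_{A^{T}}(\bm{v})$: one must be careful that inverting entries and transposing interact correctly, i.e.\ that $\bm{v}^{-1}$ as a column substituted into $\bm{x}^{-}A\bm{x}$ really produces the objective for $A^{T}$ evaluated at $\bm{v}$, rather than some other combination. Once that identity is pinned down, combining it with the transpose-invariance of $\lambda$ and the already-known eigenvector attainment closes the proof; no further estimates are needed.
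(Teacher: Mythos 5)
Your proof is correct, but your lower bound goes by a genuinely different route than the paper's. The paper establishes $\bm{x}^{-}A\bm{x}\geq\lambda$ using the eigenvector $\bm{u}$ itself: from the componentwise inequality $\bm{x}\bm{u}^{-}\geq(\bm{x}^{-}\bm{u})^{-1}I$ and $\bm{u}^{-}\bm{u}=\mathbb{1}$ it gets
$\bm{x}^{-}A\bm{x}=\bm{x}^{-}A\bm{x}\bm{u}^{-}\bm{u}\geq\bm{x}^{-}A\bm{u}(\bm{x}^{-}\bm{u})^{-1}=\lambda$,
a short manipulation that never invokes the trace formula \eqref{E-lambda}. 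You instead derive the bound from \eqref{E-lambda} by telescoping: every cyclic product $a_{i_{1}i_{2}}\cdots a_{i_{m}i_{1}}$ occurring in $\mathop\mathrm{tr}(A^{m})$ equals the product of the terms $x_{i_{k}}^{-1}a_{i_{k}i_{k+1}}x_{i_{k+1}}$, each of which is $\leq\varphi(\bm{x})$, so $\mathop\mathrm{tr}\nolimits^{1/m}(A^{m})\leq\varphi(\bm{x})$ by isotonicity of powers, and hence $\lambda\leq\varphi(\bm{x})$. Both arguments are sound; the paper's is shorter and needs only the existence of a zero-free eigenvector (guaranteed by irreducibility), while yours is self-contained at the level of the cycle-mean characterization and does not use the eigenvector in the lower bound at all, so it would work even where \eqref{E-lambda} is taken as the definition of $\lambda$. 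Your attainment step (substitute $\bm{u}$; for $(\bm{v}^{-})^{T}$ rewrite the scalar $\bm{x}^{-}A\bm{x}$ as $\bm{v}^{-}A^{T}\bm{v}$) is essentially identical to the paper's, and you additionally make explicit a point the paper leaves tacit: that $A^{T}$ is irreducible with the \emph{same} eigenvalue $\lambda$, since $\mathop\mathrm{tr}(A^{m})=\mathop\mathrm{tr}((A^{T})^{m})$ renders \eqref{E-lambda} transpose-invariant. That explicitness is a small but genuine improvement in rigor.
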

\begin{proof}
It is easy to verify that any vector $\bm{x}$ with nonzero elements satisfies the inequality $\bm{x}^{-}A\bm{x}\geq\lambda$. Indeed, let us take the eigenvector $\bm{u}$ and note that $\bm{x}\bm{u}^{-}\geq(\bm{x}^{-}\bm{u})^{-1}I$. Furthermore, we have
$$
\bm{x}^{-} A\bm{x}
=
\bm{x}^{-} A\bm{x}\bm{u}^{-}\bm{u}
\geq
\bm{x}^{-}A\bm{u}(\bm{x}^{-}\bm{u})^{-1}
=
\lambda.
$$

It remains to present particular vectors $\bm{x}$ that turn the inequality into an equality. With $\bm{x}=\bm{u}$ we have
$$
\bm{x}^{-}A\bm{x}
=
\bm{u}^{-}A\bm{u}
=
\lambda\bm{u}^{-}\bm{u}
=
\lambda.
$$

Similarly, when $\bm{x}=(\bm{v}^{-})^{T}$, we get the equality
$$
\bm{x}^{-}A\bm{x}
=
\bm{x}^{T}A^{T}(\bm{x}^{-})^{T}
=
\bm{v}^{-}A^{T}\bm{v}
=
\lambda\bm{v}^{-}\bm{v}
=
\lambda,
$$
which completes the proof.
\end{proof}

Assuming that a matrix $A=(a_{ij})\in\mathbb{X}^{n\times n}$ is irreducible and has an eigenvalue $\lambda$, we denote the set of vectors $\bm{x}$ that give minimum of $\bm{x}^{-}A\bm{x}=\lambda$ by
$$
X_{A}
=
\arg\min_{\bm{x}\in\mathbb{X}_{+}^{n}}\bm{x}^{-}A\bm{x}.
$$

Now we show that the set $X_{A}$ is closed under main operations on vectors in $\mathbb{X}^{n}$.

\begin{lemma}\label{L-cxXA}
Suppose that $\bm{x},\bm{y}\in X_{A}$ and $c\in\mathbb{X}$. Then the following statements are valid:
\begin{enumerate}
\item[(a)] $c\bm{x}\in X_{A}$;
\item[(b)] $\bm{x}\oplus\bm{y}\in X_{A}$;
\item[(c)] $(\bm{x}^{-}\oplus\bm{y}^{-})^{-}\in X_{A}$.
\end{enumerate}
\end{lemma}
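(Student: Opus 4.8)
The plan is to exploit the fact, established in the proof of Lemma~\ref{L-xAxlambda}, that every vector $\bm{w}\in\mathbb{X}_{+}^{n}$ satisfies $\bm{w}^{-}A\bm{w}\geq\lambda$. Consequently, to show that a given vector $\bm{w}$ belongs to $X_{A}$, it suffices to verify two things: that $\bm{w}$ has only nonzero entries, so that $\bm{w}\in\mathbb{X}_{+}^{n}$, and that the reverse inequality $\bm{w}^{-}A\bm{w}\leq\lambda$ holds. In each of the three cases the nonzero-entry condition is immediate, since a componentwise maximum, or a combination of nonzero entries and their inverses, cannot produce a zero coordinate. The substance of the argument therefore reduces to establishing upper bounds of the form $\bm{w}^{-}A\bm{w}\leq\lambda$.

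For part (a), I would invoke scale invariance of $\varphi$. Writing $(c\bm{x})^{-}=c^{-1}\bm{x}^{-}$ and using that scalars commute with the matrix product, one obtains $(c\bm{x})^{-}A(c\bm{x})=c^{-1}c\,\bm{x}^{-}A\bm{x}=\bm{x}^{-}A\bm{x}=\lambda$ directly; here $c$ must be taken invertible, since for $c=\mathbb{0}$ the vector $c\bm{x}$ has zero entries and cannot lie in $X_{A}$. For parts (b) and (c) the strategy is uniform: expand the quadratic form $\bm{w}^{-}A\bm{w}=\bigoplus_{i,j}w_{i}^{-1}a_{ij}w_{j}$, bound each summand separately by $\lambda$, and then let idempotency, in the form $\lambda\oplus\lambda=\lambda$, collapse the bound on the whole sum back to $\lambda$.

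In (b), with $\bm{w}=\bm{x}\oplus\bm{y}$, I would split the right factor as $w_{j}=x_{j}\oplus y_{j}$ and bound the left factor through the order-reversal of inversion, namely $(x_{i}\oplus y_{i})^{-1}\leq x_{i}^{-1}$ and $(x_{i}\oplus y_{i})^{-1}\leq y_{i}^{-1}$. Isotonicity of multiplication then yields $w_{i}^{-1}a_{ij}w_{j}\leq x_{i}^{-1}a_{ij}x_{j}\oplus y_{i}^{-1}a_{ij}y_{j}\leq\lambda$. In (c), with $\bm{w}=(\bm{x}^{-}\oplus\bm{y}^{-})^{-}$, the roles are exactly dual: now $w_{i}^{-1}=x_{i}^{-1}\oplus y_{i}^{-1}$ splits the left factor, while the inequalities $w_{j}\leq x_{j}$ and $w_{j}\leq y_{j}$, obtained from order-reversal applied to $x_{j}^{-1}\leq x_{j}^{-1}\oplus y_{j}^{-1}$, bound the right factor, giving the same term estimate $w_{i}^{-1}a_{ij}w_{j}\leq x_{i}^{-1}a_{ij}x_{j}\oplus y_{i}^{-1}a_{ij}y_{j}\leq\lambda$.

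The only real subtlety, and the step I would watch most carefully, is the inverse of a sum appearing in part (c): one cannot distribute inversion over $\oplus$, so it is essential not to manipulate $(\bm{x}^{-}\oplus\bm{y}^{-})^{-}$ algebraically, but to use only the componentwise inequalities $w_{j}\leq x_{j}$ and $w_{j}\leq y_{j}$. The decisive point in both (b) and (c) is the correct pairing of each inverse factor with the corresponding ordinary factor, with $x_{i}^{-1}$ matched to $x_{j}$ and $y_{i}^{-1}$ matched to $y_{j}$, which is precisely what keeps every cross term under control and bounded by either $\bm{x}^{-}A\bm{x}$ or $\bm{y}^{-}A\bm{y}$.
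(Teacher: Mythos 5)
Your proof is correct and takes essentially the same route as the paper: both establish the upper bound $\bm{z}^{-}A\bm{z}\leq\lambda$ by pairing $x_{i}^{-1}$ with $x_{j}$ and $y_{i}^{-1}$ with $y_{j}$ (your term-by-term estimate is precisely the implicit justification of the paper's single displayed inequality $\bm{x}^{-}A\bm{x}\oplus\bm{y}^{-}A\bm{y}\geq\bm{z}^{-}A\bm{z}$), and then invoke the universal lower bound $\bm{w}^{-}A\bm{w}\geq\lambda$ from Lemma~\ref{L-xAxlambda} to conclude equality. Your caveat that $c$ must be nonzero in part (a) is a legitimate refinement of a point the paper dismisses as ``obvious.''
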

\begin{proof}
The first statement is obvious. To verify the next one, we take vectors $\bm{x}=(x_{i})$ and $\bm{y}=(y_{i})$, and consider a vector $\bm{z}=(z_{i})$ defined as $\bm{z}=\bm{x}\oplus\bm{y}$. With the condition $\bm{x},\bm{y}\in X_{A}$, we have
\begin{multline*}
\lambda
=
\bm{x}^{-}A\bm{x}
\oplus
\bm{y}^{-}A\bm{y}
=
\bigoplus_{i=1}^{n}\bigoplus_{j=1}^{n}x_{i}^{-1}a_{ij}x_{j}
\oplus
\bigoplus_{i=1}^{n}\bigoplus_{j=1}^{n}y_{i}^{-1}a_{ij}y_{j}
\\
\geq
\bigoplus_{i=1}^{n}\bigoplus_{j=1}^{n}(x_{i}\oplus y_{i})^{-1}a_{ij}(x_{j}\oplus y_{j})
=
\bigoplus_{i=1}^{n}\bigoplus_{j=1}^{n}z_{i}^{-1}a_{ij}z_{j}
=
\bm{z}^{-}A\bm{z}.
\end{multline*}

Hence we arrive at the inequality $\lambda\geq\bm{z}^{-}A\bm{z}$. Since the opposite inequality is always valid, we conclude that $\lambda=\bm{z}^{-}A\bm{z}$. Therefore, $\bm{z}=\bm{x}\oplus\bm{y}\in X_{A}$.

The last statement is verified in much the same way. We put $\bm{z}=(\bm{x}^{-}\oplus\bm{y}^{-})^{-}$ and then note that
\begin{multline*}
\lambda
=
\bm{x}^{-}A\bm{x}
\oplus
\bm{y}^{-}A\bm{y}
=
\bigoplus_{i=1}^{n}\bigoplus_{j=1}^{n}x_{i}^{-1}a_{ij}x_{j}
\oplus
\bigoplus_{i=1}^{n}\bigoplus_{j=1}^{n}y_{i}^{-1}a_{ij}y_{j}
\\
\geq
\bigoplus_{i=1}^{n}\bigoplus_{j=1}^{n}(x_{i}^{-1}\oplus y_{i}^{-1})a_{ij}(x_{j}^{-1}\oplus y_{j}^{-1})^{-1}
=
\bigoplus_{i=1}^{n}\bigoplus_{j=1}^{n}z_{i}^{-1}a_{ij}z_{j}
=
\bm{z}^{-}A\bm{z}.
\end{multline*}

The rest of the proof is as before.
\end{proof}

Note that with the first and second statements of Lemma~\ref{L-cxXA}, the set $X_{A}$ appears to be a vector subsemimodule in the semimodule $\mathbb{X}$.

\begin{lemma}\label{L-xay1aXA}
Suppose vectors $\bm{x}=(x_{i})$ and $\bm{y}=(y_{i})$ satisfy the condition $\bm{x},\bm{y}\in X_{A}$. Then for all real $\alpha$ such that $0\leq\alpha\leq1$, it holds that
$$
\left(
\begin{array}{c}
x_{1}^{\alpha}y_{1}^{1-\alpha}
\\
\vdots
\\
x_{n}^{\alpha}y_{n}^{1-\alpha}
\end{array}
\right)
\in X_{A}.
$$
\end{lemma}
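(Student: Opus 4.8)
The plan is to follow the direct, entrywise approach used in the proof of Lemma~\ref{L-cxXA} rather than to try to derive the statement from the closure properties (b) and (c) established there. In the prototypical semifield $\mathbb{R}_{\max,+}$ those properties amount to closure under the componentwise maximum and minimum, whereas the vector $\bm{z}=(z_{i})$ with $z_{i}=x_{i}^{\alpha}y_{i}^{1-\alpha}$ is a weighted geometric mean lying strictly between them; so no finite combination of (b) and (c) produces it directly, and a fresh computation is needed.

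First I would record what membership in $X_{A}$ gives us. Since $\bm{x},\bm{y}\in X_{A}$, we have $\bm{x}^{-}A\bm{x}=\bm{y}^{-}A\bm{y}=\lambda$, and because each left-hand side is a sum over all pairs $(i,j)$ of the terms $x_{i}^{-1}a_{ij}x_{j}$ and $y_{i}^{-1}a_{ij}y_{j}$, every such term is bounded above by $\lambda$:
$$
x_{i}^{-1}a_{ij}x_{j}\leq\lambda,
\qquad
y_{i}^{-1}a_{ij}y_{j}\leq\lambda,
\qquad
1\leq i,j\leq n.
$$
The endpoint cases $\alpha=0$ and $\alpha=1$ give $\bm{z}=\bm{y}$ and $\bm{z}=\bm{x}$ and are trivial, so it suffices to treat $0<\alpha<1$.

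The crucial step is a factorization of the generic summand of $\bm{z}^{-}A\bm{z}$. Writing $a_{ij}=a_{ij}^{\alpha}a_{ij}^{1-\alpha}$ and regrouping the powers, I obtain
$$
z_{i}^{-1}a_{ij}z_{j}
=
x_{i}^{-\alpha}y_{i}^{-(1-\alpha)}a_{ij}x_{j}^{\alpha}y_{j}^{1-\alpha}
=
\left(x_{i}^{-1}a_{ij}x_{j}\right)^{\alpha}
\left(y_{i}^{-1}a_{ij}y_{j}\right)^{1-\alpha}.
$$
The degenerate case $a_{ij}=\mathbb{0}$ is harmless, since the term then vanishes and stays below $\lambda$. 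Next I invoke monotonicity of the power map for nonnegative exponents, which itself follows from the binomial identity: if $u\leq v$, then $(u\oplus v)^{\beta}=u^{\beta}\oplus v^{\beta}=v^{\beta}$, whence $u^{\beta}\leq v^{\beta}$. Applying this to the two factors above, with the bounds just recorded, yields $z_{i}^{-1}a_{ij}z_{j}\leq\lambda^{\alpha}\lambda^{1-\alpha}=\lambda$ for every $(i,j)$.

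Summing over all $i$ and $j$ then gives $\bm{z}^{-}A\bm{z}\leq\lambda$. Since the reverse inequality $\bm{z}^{-}A\bm{z}\geq\lambda$ always holds by Lemma~\ref{L-xAxlambda}, equality follows and hence $\bm{z}\in X_{A}$. The only point requiring care is the factorization together with the correct use of the power laws and isotonicity; once those are in hand, the argument closes exactly as in the proof of Lemma~\ref{L-cxXA}.
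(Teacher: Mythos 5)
Your proof is correct and is essentially the paper's own argument in termwise form: the paper writes $\lambda=(\bm{x}^{-}A\bm{x})^{\alpha}(\bm{y}^{-}A\bm{y})^{1-\alpha}$, expands both factors by the binomial identity, and discards the off-diagonal cross terms of the resulting product of sums, which amounts exactly to your factorization $z_{i}^{-1}a_{ij}z_{j}=\left(x_{i}^{-1}a_{ij}x_{j}\right)^{\alpha}\left(y_{i}^{-1}a_{ij}y_{j}\right)^{1-\alpha}$ applied summand by summand. Both arguments then close identically, by combining the upper bound $\bm{z}^{-}A\bm{z}\leq\lambda$ with the standing lower bound from Lemma~\ref{L-xAxlambda}.
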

\begin{proof}
Assuming that $\bm{z}=(x_{1}^{\alpha}y_{1}^{1-\alpha},\ldots,x_{n}^{\alpha}y_{n}^{1-\alpha})^{T}$, where $0\leq\alpha\leq1$, we have
\begin{multline*}
\lambda
=
(\bm{x}^{-}A\bm{x})^{\alpha}
(\bm{y}^{-}A\bm{y})^{1-\alpha}
=
\bigoplus_{i=1}^{n}\bigoplus_{j=1}^{n}x_{i}^{-\alpha}a_{ij}^{\alpha}x_{j}^{\alpha}
\bigoplus_{k=1}^{n}\bigoplus_{l=1}^{n}y_{k}^{-(1-\alpha)}a_{kl}^{1-\alpha}y_{l}^{1-\alpha}
\\
\geq
\bigoplus_{i=1}^{n}\bigoplus_{j=1}^{n}x_{i}^{-\alpha}y_{i}^{-(1-\alpha)}a_{ij}x_{j}^{\alpha}y_{j}^{1-\alpha}
=
\bigoplus_{i=1}^{n}\bigoplus_{j=1}^{n}z_{i}^{-1}a_{ij}z_{j}
=
\bm{z}^{-}A\bm{z}.
\end{multline*}

Using the same arguments as in the previous lemma, we arrive at the desired result $\bm{z}\in X_{A}$.
\end{proof}

Consider a particular matrix $A$ that has the form
\begin{equation}
A
=
\left(
\begin{array}{cccc}
\mathbb{0} & a_{12} & \ldots & a_{1n}
\\
a_{21} & \mathbb{0} & \ldots & \mathbb{0}
\\
\vdots & \vdots & \ddots & \vdots
\\
a_{n1} & \mathbb{0} & \ldots & \mathbb{0}
\end{array}
\right),
\label{E-Aa2a2}
\end{equation}
where all entries $a_{12},\ldots,a_{1n}$ and $a_{21},\ldots,a_{n1}$ are assumed to be nonzero.

Denote the first column and row of $A$ as follows:
$$
\bm{a}
=
(\mathbb{0},a_{21},\ldots,a_{n1})^{T},
\qquad
\bm{b}^{-}
=
(\mathbb{0},a_{12},\ldots,a_{1n}).
$$

Note that now we can write
$$
\bm{x}^{-}A\bm{x}
=
\bm{x}^{-}\bm{a}
\oplus
\bm{b}^{-}\bm{x}.
$$

It is not difficult to see that matrix \eqref{E-Aa2a2} is irreducible. In the case of this matrix, the result of Lemma~\ref{L-xay1aXA} can be refined as follows.
\begin{lemma}
\label{L-xiaiyi1aiXA}
Suppose vectors $\bm{x}=(x_{i})$ and $\bm{y}=(y_{i})$ satisfy the condition $\bm{x},\bm{y}\in X_{A}$ for matrix \eqref{E-Aa2a2}. Then for all real $\alpha_{i}$ such that $0\leq\alpha_{i}\leq1$, it holds that
$$
\left(
\begin{array}{c}
x_{1}^{\alpha_{1}}y_{1}^{1-\alpha_{1}}
\\
\vdots
\\
x_{n}^{\alpha_{n}}y_{n}^{1-\alpha_{n}}
\end{array}
\right)
\in X_{A}.
$$
\end{lemma}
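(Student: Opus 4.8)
The plan is to exploit the separable representation $\bm{x}^{-}A\bm{x}=\bm{x}^{-}\bm{a}\oplus\bm{b}^{-}\bm{x}$ that holds for matrix \eqref{E-Aa2a2}, since this is exactly the feature that distinguishes the present lemma from Lemma~\ref{L-xay1aXA}. First I would record the consequence of the hypothesis $\bm{x},\bm{y}\in X_{A}$: because each of $\bm{x}^{-}A\bm{x}$ and $\bm{y}^{-}A\bm{y}$ equals $\lambda$ and is the idempotent sum of the two parts $\bm{x}^{-}\bm{a}$ and $\bm{b}^{-}\bm{x}$, every individual summand is bounded by $\lambda$. Explicitly, for all $i\geq2$ we have $x_{i}^{-1}a_{i1}\leq\lambda$ and $y_{i}^{-1}a_{i1}\leq\lambda$, and for all $j\geq2$ we have $a_{1j}x_{j}\leq\lambda$ and $a_{1j}y_{j}\leq\lambda$.

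Next, writing $z_{i}=x_{i}^{\alpha_{i}}y_{i}^{1-\alpha_{i}}$, I would observe that each summand of $\bm{z}^{-}A\bm{z}$ is a power-weighted combination, in a single coordinate, of the corresponding summands for $\bm{x}$ and $\bm{y}$. Indeed, using $a_{i1}=a_{i1}^{\alpha_{i}}a_{i1}^{1-\alpha_{i}}$ and the distributivity of real powers over products, one gets $z_{i}^{-1}a_{i1}=(x_{i}^{-1}a_{i1})^{\alpha_{i}}(y_{i}^{-1}a_{i1})^{1-\alpha_{i}}$ and likewise $a_{1j}z_{j}=(a_{1j}x_{j})^{\alpha_{j}}(a_{1j}y_{j})^{1-\alpha_{j}}$. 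Here it is crucial that after the splitting every summand involves only one coordinate, so the exponent attached to it can be chosen independently; this is precisely what fails for a general matrix, where a cross-term $x_{i}^{-1}a_{ij}x_{j}$ ties together two coordinates and thus forces the single common exponent of Lemma~\ref{L-xay1aXA}.

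I would then bound each combined summand by $\lambda$. By isotonicity of the powers $(\cdot)^{\alpha_{i}}$ and $(\cdot)^{1-\alpha_{i}}$, which is legitimate because both exponents are nonnegative for $0\leq\alpha_{i}\leq1$ and which itself follows from the binomial identity $(x\oplus y)^{\alpha}=x^{\alpha}\oplus y^{\alpha}$, the bounds from the first paragraph give $(x_{i}^{-1}a_{i1})^{\alpha_{i}}(y_{i}^{-1}a_{i1})^{1-\alpha_{i}}\leq\lambda^{\alpha_{i}}\lambda^{1-\alpha_{i}}=\lambda$, and similarly $(a_{1j}x_{j})^{\alpha_{j}}(a_{1j}y_{j})^{1-\alpha_{j}}\leq\lambda$, the identity $\lambda^{\alpha}\lambda^{1-\alpha}=\lambda$ being immediate. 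Taking the idempotent sum over all summands yields $\bm{z}^{-}\bm{a}\leq\lambda$ and $\bm{b}^{-}\bm{z}\leq\lambda$, hence $\bm{z}^{-}A\bm{z}=\bm{z}^{-}\bm{a}\oplus\bm{b}^{-}\bm{z}\leq\lambda$. Combined with the reverse inequality $\bm{z}^{-}A\bm{z}\geq\lambda$, valid for every vector with nonzero entries by Lemma~\ref{L-xAxlambda}, this forces $\bm{z}^{-}A\bm{z}=\lambda$, i.e.\ $\bm{z}\in X_{A}$.

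The only genuine obstacle is conceptual rather than computational: recognizing that the separable form is what permits coordinatewise exponents, and then verifying cleanly that each summand factors as a power-weighted product carrying the correct single exponent. Once these per-coordinate factorizations are in hand, the remaining estimates are routine applications of isotonicity together with the reverse inequality already established in Lemma~\ref{L-xAxlambda}.
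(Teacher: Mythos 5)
Your proof is correct, but it takes a genuinely different route from the paper's. The paper proves this lemma \emph{via} Lemma~\ref{L-xay1aXA}: for each $i$ it forms the auxiliary vector $\bm{z}_{i}$ whose coordinates all carry the same exponent $\alpha_{i}$, notes that $\bm{z}_{i}\in X_{A}$ by Lemma~\ref{L-xay1aXA}, takes the idempotent sum $\lambda=\bigoplus_{i}\bm{z}_{i}^{-}A\bm{z}_{i}$, and then discards all off-diagonal terms, keeping from each $\bm{z}_{i}$ only the summand in coordinate $i$; these retained terms reassemble exactly into $\bm{z}^{-}\bm{a}\oplus\bm{b}^{-}\bm{z}=\bm{z}^{-}A\bm{z}$, giving $\lambda\geq\bm{z}^{-}A\bm{z}$. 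You instead bypass Lemma~\ref{L-xay1aXA} entirely: you bound each summand of $\bm{x}^{-}A\bm{x}=\lambda$ and $\bm{y}^{-}A\bm{y}=\lambda$ by $\lambda$, factor each summand of $\bm{z}^{-}A\bm{z}$ as a power-weighted product $(x_{i}^{-1}a_{i1})^{\alpha_{i}}(y_{i}^{-1}a_{i1})^{1-\alpha_{i}}$ (and likewise for the row terms), and conclude $\bm{z}^{-}A\bm{z}\leq\lambda$ by isotonicity of nonnegative powers; both arguments then close with the universal lower bound $\bm{z}^{-}A\bm{z}\geq\lambda$ from Lemma~\ref{L-xAxlambda}. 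Your version is self-contained modulo Lemma~\ref{L-xAxlambda} and makes the role of separability fully explicit --- the per-coordinate factorization is available precisely because no summand couples two coordinates, which is also the correct explanation of why a general matrix forces the single common exponent of Lemma~\ref{L-xay1aXA}. The paper's version is shorter on the page because it leans on the already-proved Lemma~\ref{L-xay1aXA}, but its mechanism (dropping cross-index terms from an idempotent sum) is less transparent about where separability enters. The two arguments are of essentially equal strength; yours is the more elementary and arguably the more illuminating of the two.
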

\begin{proof}
For each $i=1,\ldots,n$ we take a number $\alpha_{i}$ such that $0\leq\alpha_{i}\leq1$, and define vectors
$$
\bm{z}
=
\left(
\begin{array}{c}
x_{1}^{\alpha_{1}}y_{1}^{1-\alpha_{1}}
\\
\vdots
\\
x_{n}^{\alpha_{n}}y_{n}^{1-\alpha_{n}}
\end{array}
\right),
\quad
\bm{z}_{i}
=
\left(
\begin{array}{c}
x_{1}^{\alpha_{i}}y_{1}^{1-\alpha_{i}}
\\
\vdots
\\
x_{n}^{\alpha_{i}}y_{n}^{1-\alpha_{i}}
\end{array}
\right).
$$

It follows from Lemma~\ref{L-xay1aXA} that $\bm{z}_{1},\ldots,\bm{z}_{n}\in X_{A}$. Furthermore, we write
\begin{multline*}
\lambda
=
\bigoplus_{i=1}^{n}
\bm{z}_{i}^{-}A\bm{z}_{i}
=
\bigoplus_{i=1}^{n}
(\bm{z}_{i}^{-}\bm{a}\oplus\bm{b}^{-}\bm{z}_{i})
\\
=
\bigoplus_{i=1}^{n}
\bigoplus_{j=1}^{n}
\left(x_{j}^{-\alpha_{i}}y_{j}^{-(1-\alpha_{i})}a_{j}
\oplus
b_{j}^{-1}x_{j}^{\alpha_{i}}y_{j}^{1-\alpha_{i}}\right)
\\
\geq
\bigoplus_{i=1}^{n}
\left(x_{i}^{-\alpha_{i}}y_{i}^{-(1-\alpha_{i})}a_{i}
\oplus
b_{i}^{-1}x_{i}^{\alpha_{i}}y_{i}^{1-\alpha_{i}}\right)
=
\bm{z}^{-}\bm{a}\oplus\bm{b}^{-}\bm{z}
=
\bm{z}^{-}A\bm{z}.
\end{multline*}

The rest of the proof goes through as before.
\end{proof}

By combining the results of Lemmas~\ref{L-xAxlambda} and \ref{L-xiaiyi1aiXA}, we can arrive at the following statement.
\begin{lemma}
\label{L-uiaivai1}
Let $A$ be a matrix defined as \eqref{E-Aa2a2} with an eigenvalue $\lambda$. Suppose $\bm{u}=(u_{i})$ and $\bm{v}=(v_{i})$ are eigenvectors of the respective  matrices $A$ and $A^{T}$. Then \eqref{E-xAxlambda} is valid for any vector
$$
\bm{x}
=
\left(
\begin{array}{c}
u_{1}^{\alpha_{1}}v_{1}^{\alpha_{1}-1}
\\
\vdots
\\
u_{n}^{\alpha_{n}}v_{n}^{\alpha_{n}-1}
\end{array}
\right),
\qquad
0\leq\alpha_{1},\ldots,\alpha_{n}\leq1.
$$
\end{lemma}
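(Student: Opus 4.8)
The plan is to obtain the result as a direct composition of the two preceding lemmas rather than by any new computation. First I would invoke Lemma~\ref{L-xAxlambda}: since $A$ is irreducible with eigenvalue $\lambda$, and $\bm{u}$, $\bm{v}$ are eigenvectors of $A$ and $A^{T}$ respectively, the lemma asserts that both $\bm{u}$ and $(\bm{v}^{-})^{T}$ attain the minimum in \eqref{E-xAxlambda}. In the notation introduced just before the statement, this means $\bm{u}\in X_{A}$ and $(\bm{v}^{-})^{T}\in X_{A}$.

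Next I would record the explicit components of these two members of $X_{A}$: the vector $\bm{u}$ has $i$th entry $u_{i}$, while $(\bm{v}^{-})^{T}$ is the column vector whose $i$th entry is $v_{i}^{-1}$. Because $A$ has the special form \eqref{E-Aa2a2}, the refined mixing result of Lemma~\ref{L-xiaiyi1aiXA} applies, allowing a \emph{separate} exponent $\alpha_{i}$ in each coordinate. I would therefore apply that lemma with $\bm{x}=\bm{u}$ and $\bm{y}=(\bm{v}^{-})^{T}$ and arbitrary $\alpha_{1},\ldots,\alpha_{n}\in[0,1]$.

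The conclusion of Lemma~\ref{L-xiaiyi1aiXA} then yields that the vector with $i$th entry $u_{i}^{\alpha_{i}}(v_{i}^{-1})^{1-\alpha_{i}}$ again lies in $X_{A}$. A one-line simplification under the idempotent power rules, $(v_{i}^{-1})^{1-\alpha_{i}}=v_{i}^{-(1-\alpha_{i})}=v_{i}^{\alpha_{i}-1}$, identifies this entry with $u_{i}^{\alpha_{i}}v_{i}^{\alpha_{i}-1}$, which is precisely the $i$th component of the vector $\bm{x}$ displayed in the statement. Membership in $X_{A}$ is by definition the assertion that such $\bm{x}$ attains the minimum, so \eqref{E-xAxlambda} holds for every choice of exponents, completing the argument.

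I do not expect a genuine obstacle here, since every step is an appeal to an already-established result; the only points needing care are the exponent bookkeeping (verifying the simplification $(v_{i}^{-1})^{1-\alpha_{i}}=v_{i}^{\alpha_{i}-1}$) and the deliberate use of the coordinatewise form in Lemma~\ref{L-xiaiyi1aiXA}, as opposed to the uniform-exponent form in Lemma~\ref{L-xay1aXA}, which is exactly what permits the independent parameters $\alpha_{1},\ldots,\alpha_{n}$ appearing in the claim.
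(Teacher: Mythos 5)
Your proposal is correct and follows exactly the paper's own proof: invoke Lemma~\ref{L-xAxlambda} to conclude $\bm{u}\in X_{A}$ and $(\bm{v}^{-})^{T}\in X_{A}$, then apply the coordinatewise mixing of Lemma~\ref{L-xiaiyi1aiXA} to this pair. Your explicit check that $(v_{i}^{-1})^{1-\alpha_{i}}=v_{i}^{\alpha_{i}-1}$ is a detail the paper leaves implicit, but the argument is the same.
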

\begin{proof}
It follows from Lemma~\ref{L-xAxlambda}, that $\bm{u}\in X_{A}$ and $(\bm{v}^{-})^{T}\in X_{A}$. It remains to apply Lemma~\ref{L-xiaiyi1aiXA} so as to complete the proof. 
\end{proof}

\section{Unconstrained Location Problem}

In this section we examine a minimax single facility location problem with Chebyshev distance when no constraints are imposed on the feasible location area.

Consider any two vectors $\bm{r}=(r_{1},\ldots,r_{n})^{T}$ and $\bm{s}=(s_{1},\ldots,s_{n})^{T}$ in $\mathbb{R}^{n}$. The Chebyshev distance ($L_{\infty}$ or maximum metric) is calculated as
\begin{equation}
\rho(\bm{r},\bm{s})
=
\max_{1\leq i\leq n}|r_{i}-s_{i}|.
\label{M-Chebyshev}
\end{equation}

Given $m\geq2$ vectors $\bm{r}_{i}=(r_{1i},\ldots,r_{ni})^{T}\in\mathbb{R}^{n}$ and constants $w_{i}\in\mathbb{R}$, $i=1,\ldots,m$, the location problem under examination is to determine the vectors $\bm{x}=(x_{1},\ldots,x_{n})^{T}\in\mathbb{R}^{n}$ that provide the minimum
\begin{equation}
\min_{\bm{x}\in\mathbb{R}^{n}}\max_{1\leq i\leq m}(\rho(\bm{r}_{i},\bm{x})+w_{i}).
\label{P-Chebyshev}
\end{equation}

Note that such problems are known as unweighted Rawls problems with addends \cite{Hansen81Constrained}. Following the terminology of \cite{Elzinga72Geometrical}, the problem can also be referred to as the multidimensional Chebyshev Messenger Boy Problem.

It is not difficult to solve the problem on the plane by using geometric arguments (see, eg, \cite{Sule01Logistics,Moradi09Single}). Below we give a new algebraic solution that is based on representation of the problem in terms of the idempotent semifield $\mathbb{R}_{\max,+}$, and application of the result from the previous section.

\subsection{Algebraic Representation}
First we rewrite \eqref{M-Chebyshev} as follows
$$
\rho(\bm{r},\bm{s})
=
\bm{s}^{-}\bm{r}\oplus\bm{r}^{-}\bm{s}.
$$

Denote the objective function in problem \eqref{P-Chebyshev} by $\varphi(\bm{x})$ and write
$$
\varphi(\bm{x})
=
\bigoplus_{i=1}^{m}w_{i}\rho(\bm{r}_{i},\bm{x}).
$$

With the vectors
$$
\bm{p}
=
w_{1}\bm{r}_{1}\oplus\cdots\oplus w_{m}\bm{r}_{m},
\qquad
\bm{q}^{-}
=
w_{1}\bm{r}_{1}^{-}\oplus\cdots\oplus w_{m}\bm{r}_{m}^{-},
$$
we have
$$
\varphi(\bm{x})
=
\bigoplus_{i=1}^{m}w_{i}(\bm{x}^{-}\bm{r}_{i}\oplus\bm{r}_{i}^{-}\bm{x})
=
\bm{x}^{-}\bm{p}
\oplus
\bm{q}^{-}\bm{x},
$$
and then represent problem \eqref{P-Chebyshev} as
\begin{equation}\label{P-Chebyshev1}
\min_{\bm{x}\in\mathbb{R}^{n}}\varphi(\bm{x}).
\end{equation}

Furthermore, we introduce a vector
$$
\bm{y}
=
\left(
\begin{array}{c}
y_{0}
\\
y_{1}
\\
\vdots
\\
y_{n}
\end{array}
\right)
=
\left(
\begin{array}{c}
\mathbb{1} \\
\bm{x}
\end{array}
\right),
$$
and a matrix of order $n+1$
$$
A
=
\left(
\begin{array}{cc}
\mathbb{0} & \bm{q}^{-} \\
\bm{p} & \mathbb{0}
\end{array}
\right).
$$

Since we now have
$$
\varphi(\bm{x})=\bm{x}^{-}\bm{p}\oplus\bm{q}^{-}\bm{x}=\bm{y}^{-}A\bm{y},
$$
problem \eqref{P-Chebyshev1} reduces to that of the form
\begin{equation}\label{P-Chebyshev2}
\min_{\bm{y}\in\mathbb{R}^{n+1}}\bm{y}^{-}A\bm{y}.
\end{equation}

Note that the vectors $\bm{y}\in\mathbb{R}^{n+1}$ that solve \eqref{P-Chebyshev2} do not always have an appropriate form to give a solution to \eqref{P-Chebyshev1}. Specifically, to be consistent to \eqref{P-Chebyshev1}, the vector $\bm{y}$ must have the first element equal to $\mathbb{1}$.

\subsection{Algebraic Solution}

Consider problem \eqref{P-Chebyshev2}, and note that the matrix $A$ has the form of \eqref{E-Aa2a2} and it is irreducible. It follows from Lemma~\ref{L-uiaivai1} that
$$
\min_{\bm{y}\in\mathbb{R}^{n+1}}\bm{y}^{-}A\bm{y}
=
\lambda,
$$
where $\lambda$ is the eigenvector of $A$, and the minimum is attained at a vector that is obtained from eigenvectors $\bm{u}=(u_{i})$ and $\bm{v}=(v_{i})$ of matrices $A$ and $A^{T}$.

First we evaluate $\lambda$. For all $k=1,2,\ldots$ we have
$$
A^{2k-1}
=
(\bm{q}^{-}\bm{p})^{k-1}
\left(
\begin{array}{cc}
\mathbb{0} & \bm{q}^{-} \\
\bm{p} & \mathbb{0}
\end{array}
\right),
\qquad
A^{2k}
=
(\bm{q}^{-}\bm{p})^{k-1}
\left(
\begin{array}{cc}
\bm{q}^{-}\bm{p} & \mathbb{0} \\
\mathbb{0} & \bm{p}\bm{q}^{-}
\end{array}
\right),
$$
and therefore,
$$
\mathop\mathrm{tr}(A^{2k-1})
=
\mathbb{0},
\qquad
\mathop\mathrm{tr}(A^{2k})
=
(\bm{q}^{-}\bm{p})^{k}.
$$

Finally, application of \eqref{E-lambda} gives
$$
\lambda
=
\bigoplus_{m=1}^{n}\mathop\mathrm{tr}\nolimits^{1/m}(A^{m})
=
(\bm{q}^{-}\bm{p})^{1/2}.
$$

To get vectors that produce the minimum in \eqref{P-Chebyshev2}, we need to derive the eigenvectors of the matrices $A$ and $A^{T}$. Note that $A^{T}$ is obtained from $A$ by replacement of $\bm{p}$ with $(\bm{q}^{-})^{T}$ and $\bm{q}^{-}$ with $\bm{p}^{T}$. Therefore, it will suffice to find the eigenvectors for $A$, and then turn them into those for $A^{T}$ by the above replacement.

To obtain the eigenvectors of $A$, we consider the matrix
$$
\lambda^{-1}A
=
(\bm{q}^{-}\bm{p})^{-1/2}
\left(
\begin{array}{cc}
\mathbb{0} & \bm{q}^{-} \\
\bm{p} & \mathbb{0}
\end{array}
\right).
$$

Since for any $k=1,2,\ldots$ it holds that
\begin{align*}
(\lambda^{-1}A)^{2k-1}
&=
(\bm{q}^{-}\bm{p})^{-1/2}
\left(
\begin{array}{cc}
\mathbb{0} & \bm{q}^{-} \\
\bm{p} & \mathbb{0}
\end{array}
\right),
\\
(\lambda^{-1}A)^{2k}
&=
(\bm{q}^{-}\bm{p})^{-1}
\left(
\begin{array}{cc}
\bm{q}^{-}\bm{p} & \mathbb{0} \\
\mathbb{0} & \bm{p}\bm{q}^{-}
\end{array}
\right),
\end{align*}
we arrive at the matrix $A^{\times}$ in the form
$$
A^{\times}
=
\lambda^{-1}A\oplus\cdots\oplus(\lambda^{-1}A)^{n+1}
=
\left(
\begin{array}{cc}
\mathbb{1} & (\bm{q}^{-}\bm{p})^{-1/2}\bm{q}^{-} \\
(\bm{q}^{-}\bm{p})^{-1/2}\bm{p} & (\bm{q}^{-}\bm{p})^{-1}\bm{p}\bm{q}^{-}
\end{array}
\right).
$$

It is not difficult to verify that in the matrix $A^{\times}$, any column that has $\mathbb{1}$ on the diagonal is collinear with the first column. Indeed, suppose that the submatrix $(\bm{q}^{-}\bm{p})^{-1}\bm{p}\bm{q}^{-}$ has a diagonal element equal to $\mathbb{1}$, say the element in its first column (that corresponds to the second column of $A^{\times}$). In this case, we have $\bm{q}^{-}\bm{p}=q_{1}^{-1}p_{1}$. The matrix $A^{\times}$ takes the form
$$
A^{\times}
=
\left(
\begin{array}{cc}
\mathbb{1} & q_{1}^{1/2}p_{1}^{-1/2}\bm{q}^{-} \\
q_{1}^{1/2}p_{1}^{-1/2}\bm{p} & q_{1}p_{1}^{-1}\bm{p}\bm{q}^{-}
\end{array}
\right)
=
\left(
\begin{array}{ccc}
\mathbb{1} & q_{1}^{-1/2}p_{1}^{-1/2} & \ldots\\
q_{1}^{1/2}p_{1}^{-1/2}\bm{p} & p_{1}^{-1}\bm{p} & \ldots
\end{array}
\right),
$$
where the second column obviously proves to be collinear with the first one.

Let us construct a matrix $A^{+}$ that includes such columns of $A^{\times}$ that have the diagonal element equal to $\mathbb{1}$ and are independent on each other. Since all the columns with $\mathbb{1}$ on the diagonal are collinear with the first one, they can be omitted.

With the matrix $A^{+}$ formed from the first column of $A^{\times}$, we finally represent any eigenvector of $A$ as
$$
\bm{u}
=
\left(
\begin{array}{c}
\mathbb{1} \\
(\bm{q}^{-}\bm{p})^{-1/2}\bm{p}
\end{array}
\right)s,
\qquad
s\in\mathbb{R}.
$$
 
By replacing $\bm{p}$ with $(\bm{q}^{-})^{T}$ and $\bm{q}^{-}$ with $\bm{p}^{T}$, we get the eigenvectors of $A^{T}$
$$
\bm{v}
=
\left(
\begin{array}{c}
\mathbb{1} \\
(\bm{q}^{-}\bm{p})^{-1/2}(\bm{q}^{-})^{T}
\end{array}
\right)t,
\qquad
t\in\mathbb{R}.
$$

Application of Lemma~\ref{L-uiaivai1} gives a solution of \eqref{P-Chebyshev2} in the form
$$
\bm{y}
=
\left(
\begin{array}{c}
s^{\alpha_{0}}t^{1-\alpha_{0}}
\\
(\bm{q}^{-}\bm{p})^{1/2-\alpha_{1}}(p_{1}s)^{\alpha_{1}}(q_{1}t)^{1-\alpha_{1}}
\\
\vdots
\\
(\bm{q}^{-}\bm{p})^{1/2-\alpha_{n}}(p_{n}s)^{\alpha_{n}}(q_{n}t)^{1-\alpha_{n}}
\end{array}
\right),
\quad
s,t\in\mathbb{R},
\quad
0\leq\alpha_{0},\ldots,\alpha_{n}\leq1.
$$

With the condition that the first element of $\bm{y}$ must be equal to $\mathbb{1}$, we have to ensure the equation
$$
s^{\alpha_{0}}t^{\alpha_{0}-1}
=
\mathbb{1}
$$
to be valid for all $\alpha_{0}$ such that $0\leq\alpha_{0}\leq1$. Since the only solution to the equation is $s=t=\mathbb{1}$, we arrive at the solution of \eqref{P-Chebyshev1} given by
$$
\bm{x}
=
\left(
\begin{array}{c}
(\bm{q}^{-}\bm{p})^{1/2-\alpha_{1}}p_{1}^{\alpha_{1}}q_{1}^{1-\alpha_{1}}
\\
\vdots
\\
(\bm{q}^{-}\bm{p})^{1/2-\alpha_{n}}p_{n}^{\alpha_{n}}q_{n}^{1-\alpha_{n}}
\end{array}
\right),
\qquad
0\leq\alpha_{1},\ldots,\alpha_{n}\leq1.
$$

\subsection{Summary of Results}

We summarize the above results in the form of the following statements.

\begin{lemma}
\label{L-Chebyshev}
Suppose that $\bm{p}=(p_{i})$ and $\bm{q}=(q_{i})$ are vectors such that
$$
\bm{p}
=
w_{1}\bm{r}_{1}\oplus\cdots\oplus w_{m}\bm{r}_{m},
\qquad
\bm{q}^{-}
=
w_{1}\bm{r}_{1}^{-}\oplus\cdots\oplus w_{m}\bm{r}_{m}^{-}.
$$

Then the minimum in problem \eqref{P-Chebyshev1} is given by
$$
\lambda=(\bm{q}^{-}\bm{p})^{1/2},
$$
and it is attained at the vector
$$
\bm{x}
=
\left(
\begin{array}{c}
\lambda^{1-2\alpha_{1}}p_{1}^{\alpha_{1}}q_{1}^{1-\alpha_{1}}
\\
\vdots
\\
\lambda^{1-2\alpha_{n}}p_{n}^{\alpha_{n}}q_{n}^{1-\alpha_{n}}
\end{array}
\right)
$$
for all $\alpha_{i}$ such that $0\leq\alpha_{i}\leq1$, $i=1,\ldots,n$.
\end{lemma}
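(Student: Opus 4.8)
The plan is to recast the minimization of $\varphi$ in \eqref{P-Chebyshev1} as the eigenvalue-minimization problem \eqref{P-Chebyshev2} and then extract both the optimal value and a family of minimizers from the spectral data of the accompanying matrix. First I would augment the variable by setting $\bm{y}=(\mathbb{1},\bm{x}^{T})^{T}$ and, using the representation $\varphi(\bm{x})=\bm{x}^{-}\bm{p}\oplus\bm{q}^{-}\bm{x}$, verify the identity $\varphi(\bm{x})=\bm{y}^{-}A\bm{y}$ for the block matrix $A$ whose off-diagonal blocks are $\bm{q}^{-}$ and $\bm{p}$. Because every $p_{i}$ and $q_{i}$ is finite, this $A$ has precisely the shape \eqref{E-Aa2a2} and is irreducible, so the whole apparatus of Section~3 becomes applicable.

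Next I would evaluate the eigenvalue $\lambda$ of $A$ via \eqref{E-lambda}. The decisive structural fact is that the $2\times2$ block form of $A$ makes its powers alternate between the off-diagonal pattern (odd powers) and a block-diagonal pattern (even powers), each scaled by a power of the scalar $\bm{q}^{-}\bm{p}$. This gives $\mathop\mathrm{tr}(A^{2k-1})=\mathbb{0}$ and $\mathop\mathrm{tr}(A^{2k})=(\bm{q}^{-}\bm{p})^{k}$, so the maximum in \eqref{E-lambda} collapses to the single surviving term $\lambda=(\bm{q}^{-}\bm{p})^{1/2}$. Combined with the bound $\bm{y}^{-}A\bm{y}\geq\lambda$ from Lemma~\ref{L-xAxlambda}, this already identifies the optimal value of \eqref{P-Chebyshev1}.

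To pin down where the minimum is attained I would compute the eigenvectors of $A$ and of $A^{T}$ through the $A^{\times}$/$A^{+}$ construction. After forming $A^{\times}$ from the powers of $\lambda^{-1}A$, I would check that every column of $A^{\times}$ bearing $\mathbb{1}$ on its diagonal is collinear with the first column, so that $A^{+}$ reduces to that single column and yields the eigenvector $\bm{u}$ with leading entry $\mathbb{1}$ and lower block $\lambda^{-1}\bm{p}$ (up to a scalar $s$). The eigenvector $\bm{v}$ of $A^{T}$ then follows immediately by interchanging $\bm{p}$ with $(\bm{q}^{-})^{T}$ and $\bm{q}^{-}$ with $\bm{p}^{T}$. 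With $\bm{u}$ and $\bm{v}$ available, Lemma~\ref{L-uiaivai1} delivers the family $y_{i}=u_{i}^{\alpha_{i}}v_{i}^{\alpha_{i}-1}$, $0\le\alpha_{i}\le1$, as minimizers of \eqref{P-Chebyshev2}.

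The final and most delicate step is the return from \eqref{P-Chebyshev2} to \eqref{P-Chebyshev1}: only those $\bm{y}$ whose leading entry equals $\mathbb{1}$ correspond to admissible $\bm{x}$. Imposing $y_{0}=s^{\alpha_{0}}t^{\alpha_{0}-1}=\mathbb{1}$ forces the eigenvector scalings to $s=t=\mathbb{1}$, after which the surviving entries simplify, through $(\bm{q}^{-}\bm{p})^{1/2-\alpha_{i}}=\lambda^{1-2\alpha_{i}}$, to exactly the claimed form of $\bm{x}$. I expect the genuine obstacles to be this normalization and the collinearity check for the diagonal-$\mathbb{1}$ columns of $A^{\times}$; once those are settled, the eigenvalue computation and the invocation of Lemma~\ref{L-uiaivai1} are routine.
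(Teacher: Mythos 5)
Your proposal is correct and follows essentially the same route as the paper: the same augmentation $\bm{y}=(\mathbb{1},\bm{x}^{T})^{T}$ and block matrix $A$ of form \eqref{E-Aa2a2}, the same trace computation giving $\lambda=(\bm{q}^{-}\bm{p})^{1/2}$ via \eqref{E-lambda}, the same $A^{\times}$/$A^{+}$ derivation of the eigenvectors of $A$ and $A^{T}$ (including the collinearity check), and the same use of Lemma~\ref{L-uiaivai1} with the normalization $y_{0}=\mathbb{1}$ forcing $s=t=\mathbb{1}$. No gaps to report.
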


With the usual notation, we can reformulate the statement of Lemma~\ref{L-Chebyshev} as follows.
\begin{corollary}
Suppose that for each $i=1,\ldots,n$
$$
p_{i}
=
\max(r_{i1}+w_{1},\ldots,r_{im}+w_{m}),
\qquad
q_{i}
=
\min(r_{i1}-w_{1},\ldots,r_{im}-w_{m}).
$$

Then the minimum in \eqref{P-Chebyshev} is given by
$$
\lambda=\max(p_{1}-q_{1},\ldots,p_{n}-q_{n})/2,
$$
and it is attained at the vector
$$
\bm{x}
=
\left(
\begin{array}{c}
\alpha_{1}(p_{1}-\lambda)
\\
\vdots
\\
\alpha_{n}(p_{n}-\lambda)
\end{array}
\right)
+
\left(
\begin{array}{c}
(1-\alpha_{1})(q_{1}+\lambda)
\\
\vdots
\\
(1-\alpha_{n})(q_{n}+\lambda)
\end{array}
\right)
$$
for all $\alpha_{i}$ such that $0\leq\alpha_{i}\leq1$, $i=1,\ldots,n$.
\end{corollary}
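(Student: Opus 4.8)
The plan is to recognize that this Corollary is nothing more than Lemma~\ref{L-Chebyshev} rewritten in the concrete semifield $\mathbb{R}_{\max,+}$, so the whole argument is a translation from idempotent notation into ordinary arithmetic. First I would fix the dictionary the paper has already set up for $\mathbb{R}_{\max,+}$: the sum $\oplus$ is $\max$, the product is ordinary addition, the idempotent power $x^{\gamma}$ is the ordinary product $\gamma x$, the inverse $x^{-1}$ is the opposite $-x$, and the identity is $\mathbb{1}=0$. Every symbol occurring in Lemma~\ref{L-Chebyshev} is then replaced according to this dictionary.

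Next I would translate the two defining scalars. For $p_i$ the passage is immediate: $p_i=\bigoplus_{k} w_k r_{ik}$ becomes $\max_k(w_k+r_{ik})=\max(r_{i1}+w_1,\ldots,r_{im}+w_m)$. For $q_i$ the one point requiring care is that $\bm{q}$ is specified through $\bm{q}^{-}$, so I would start from $q_i^{-1}=\bigoplus_k w_k r_{ik}^{-1}$, which reads $-q_i=\max_k(w_k-r_{ik})$, and then take opposites to obtain $q_i=-\max_k(w_k-r_{ik})=\min_k(r_{ik}-w_k)$. This is the single step where inversion turns a $\max$ into a $\min$, and it is the only place I would treat with caution.

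Then I would handle the optimal value. The scalar $\bm{q}^{-}\bm{p}=\bigoplus_i q_i^{-1}p_i$ translates to $\max_i(p_i-q_i)$, and the idempotent square root $(\cdot)^{1/2}$ becomes halving, so $\lambda=(\bm{q}^{-}\bm{p})^{1/2}$ turns into $\lambda=\max(p_1-q_1,\ldots,p_n-q_n)/2$, exactly as stated. Finally I would translate the minimizer: rewriting each component $x_i=\lambda^{1-2\alpha_i}p_i^{\alpha_i}q_i^{1-\alpha_i}$ through the dictionary gives $x_i=(1-2\alpha_i)\lambda+\alpha_i p_i+(1-\alpha_i)q_i$, and a short regrouping puts this in the announced form $x_i=\alpha_i(p_i-\lambda)+(1-\alpha_i)(q_i+\lambda)$, with the admissible range $0\le\alpha_i\le1$ carried over verbatim from Lemma~\ref{L-Chebyshev}. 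Since the entire proof is a notational substitution, there is no genuine obstacle; the only care needed is the $\min$/$\max$ switch in the formula for $q_i$ and the bookkeeping in the final regrouping.
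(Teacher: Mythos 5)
Your proposal is correct and coincides with the paper's own treatment: the paper introduces this Corollary with the words "With the usual notation, we can reformulate the statement of Lemma~\ref{L-Chebyshev} as follows," i.e.\ it too regards the result as a direct dictionary translation of Lemma~\ref{L-Chebyshev} into ordinary arithmetic, with exactly the $\min$/$\max$ switch for $q_i$ and the regrouping $(1-2\alpha_i)\lambda+\alpha_i p_i+(1-\alpha_i)q_i=\alpha_i(p_i-\lambda)+(1-\alpha_i)(q_i+\lambda)$ that you carried out.
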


An illustration of the solution is demonstrated in Fig.~\ref{F-Chebyshev1}--\ref{F-Chebyshev3}. We start with two examples in Fig.~\ref{F-Chebyshev1} that present solutions in the plane $\mathbb{R}^{2}$ when $w_{i}=0$ for all $i=1,\ldots,m$. In both examples, the given points are shown with thick dots, whereas the solution set is shown with a thick line segment.
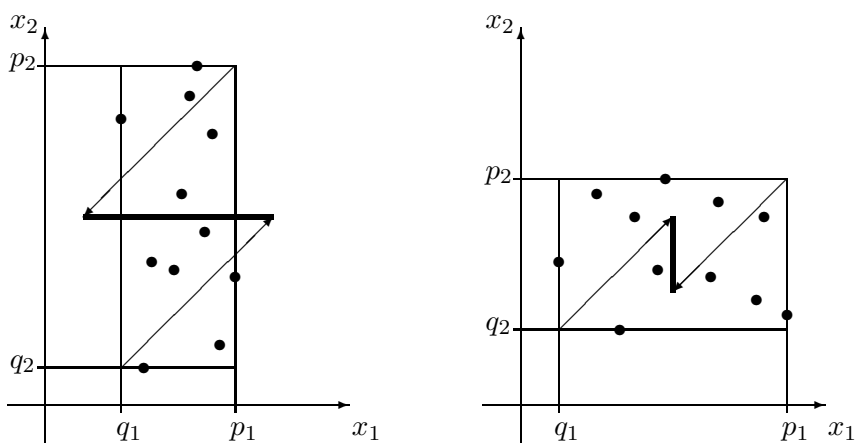
\begin{figure}[ht]
\setlength{\unitlength}{1mm}
\begin{center}

\begin{picture}(50,60)

\put(0,5){\vector(1,0){45}}
\put(5,0){\vector(0,1){55}}

\put(15,50){\line(0,-1){46}}
\put(30,50){\line(0,-1){46}}

\put(30,10){\line(-1,0){26}}
\put(30,50){\line(-1,0){26}}

\put(15,10){\vector(1,1){20}}

\put(30,50){\vector(-1,-1){20}}

\put(10,30){\linethickness{2pt}\line(1,0){25}}

\put(15,43){\circle*{1.5}}
\put(18,10){\circle*{1.5}}
\put(19,24){\circle*{1.5}}
\put(23,33){\circle*{1.5}}
\put(24,46){\circle*{1.5}}
\put(25,50){\circle*{1.5}}
\put(27,41){\circle*{1.5}}
\put(28,13){\circle*{1.5}}
\put(30,22){\circle*{1.5}}
\put(26,28){\circle*{1.5}}
\put(22,23){\circle*{1.5}}

\put(14,1){$q_{1}$}
\put(29,1){$p_{1}$}

\put(45,1){$x_{1}$}

\put(0,50){$p_{2}$}
\put(0,10){$q_{2}$}

\put(0,55){$x_{2}$}

\end{picture}
\hspace{10\unitlength}
\begin{picture}(50,60)

\put(0,5){\vector(1,0){45}}
\put(5,0){\vector(0,1){55}}

\put(10,35){\line(0,-1){31}}
\put(40,35){\line(0,-1){31}}

\put(40,15){\line(-1,0){36}}
\put(40,35){\line(-1,0){36}}

\put(10,15){\vector(1,1){15}}

\put(40,35){\vector(-1,-1){15}}

\put(25,20){\linethickness{2pt}\line(0,1){10}}

\put(15,33){\circle*{1.5}}
\put(18,15){\circle*{1.5}}
\put(10,24){\circle*{1.5}}
\put(23,23){\circle*{1.5}}
\put(24,35){\circle*{1.5}}
\put(31,32){\circle*{1.5}}
\put(20,30){\circle*{1.5}}
\put(40,17){\circle*{1.5}}
\put(30,22){\circle*{1.5}}
\put(37,30){\circle*{1.5}}
\put(36,19){\circle*{1.5}}

\put(9,1){$q_{1}$}
\put(39,1){$p_{1}$}

\put(45,1){$x_{1}$}

\put(0,35){$p_{2}$}
\put(0,15){$q_{2}$}

\put(0,55){$x_{2}$}

\end{picture}

\end{center}
\caption{Solutions in $\mathbb{R}^{2}$ when all $w_{i}=0$.}\label{F-Chebyshev1}
\end{figure}

In geometric terms, the solution is obtained as follows. Construct a minimal upright rectangle enclosing all given points. Then trace two lines that are oriented at a $45^{\circ}$ angle to the horizontal axis and go through the lower left and upper right vertices of the rectangle. The solution is the inner segment that these lines are cut off from the line drawn across the rectangle through the center points of its long sides.

Examples of solution in the space $\mathbb{R}^{3}$ are given in Fig.~\ref{F-Chebyshev2}, where the solution sets take the form of rectangles depicted by thick lines. 
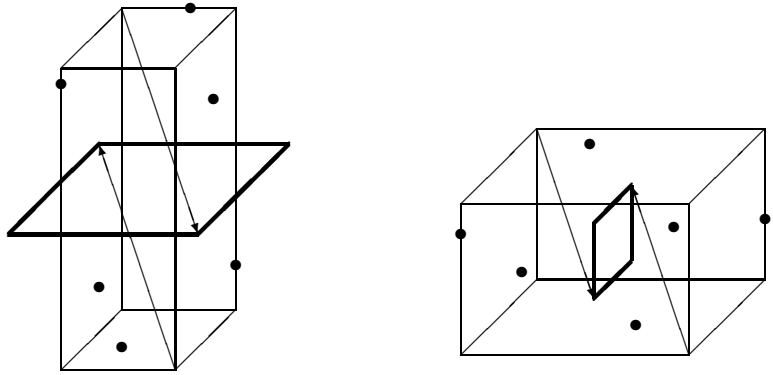
\begin{figure}[ht]
\setlength{\unitlength}{1mm}
\begin{center}

\begin{picture}(50,50)

\newsavebox\polytope
\savebox{\polytope}(30,50)[b]
{
\put(0,0){\line(1,0){15}}
\put(0,0){\line(0,1){40}}
\put(0,0){\line(1,1){8}}

\put(0,40){\line(1,0){15}}
\put(0,40){\line(1,1){8}}

\put(8,8){\line(1,0){15}}

\put(8,48){\line(1,0){15}}
\put(8,8){\line(0,1){40}}

\put(15,0){\line(0,1){40}}
\put(23,8){\line(0,1){40}}

\put(15,40){\line(1,1){8}}
\put(15,0){\line(1,1){8}}

\put(8,48){\vector(1,-3){10}}
\put(15,0){\vector(-1,3){10}}

\put(5,30.2){\thicklines\line(-1,-1){12}}
\put(5,30.0){\thicklines\line(-1,-1){12}}
\put(5,29.8){\thicklines\line(-1,-1){12}}

\put(5,30.1){\thicklines\line(1,0){25}}
\put(5,29.9){\thicklines\line(1,0){25}}

\put(18,17.8){\thicklines\line(1,1){12}}
\put(18,18.0){\thicklines\line(1,1){12}}
\put(18,18.2){\thicklines\line(1,1){12}}

\put(18,18.1){\thicklines\line(-1,0){25}}
\put(18,17.9){\thicklines\line(-1,0){25}}

\put(17,48){\circle*{1.5}}
\put(0,38){\circle*{1.5}}
\put(20,36){\circle*{1.5}}
\put(23,14){\circle*{1.5}}
\put(5,11){\circle*{1.5}}
\put(8,3){\circle*{1.5}}

}

\put(0,-2){\makebox(30,50){\usebox{\polytope}}}

\end{picture}
\hspace{10\unitlength}
\begin{picture}(50,40)

\savebox{\polytope}(40,30)[b]
{
\put(0,0){\line(1,0){30}}
\put(0,0){\line(0,1){20}}
\put(0,0){\line(1,1){10}}

\put(0,20){\line(1,0){30}}
\put(0,20){\line(1,1){10}}

\put(10,10){\line(1,0){30}}

\put(10,30){\line(1,0){30}}
\put(10,10){\line(0,1){20}}

\put(30,0){\line(0,1){20}}
\put(40,10){\line(0,1){20}}

\put(30,20){\line(1,1){10}}
\put(30,0){\line(1,1){10}}

\put(10,30){\vector(1,-3){7.5}}
\put(30,0){\vector(-1,3){7.5}}

\put(22.5,22.7){\thicklines\line(-1,-1){5}}
\put(22.5,22.5){\thicklines\line(-1,-1){5}}
\put(22.5,22.3){\thicklines\line(-1,-1){5}}

\put(22.4,22.5){\thicklines\line(0,-1){10}}
\put(22.6,22.5){\thicklines\line(0,-1){10}}

\put(17.5,7.7){\thicklines\line(1,1){5}}
\put(17.5,7.5){\thicklines\line(1,1){5}}
\put(17.5,7.3){\thicklines\line(1,1){5}}

\put(17.4,7.5){\thicklines\line(0,1){10}}
\put(17.6,7.5){\thicklines\line(0,1){10}}

\put(17,28){\circle*{1.5}}
\put(40,18){\circle*{1.5}}
\put(0,16){\circle*{1.5}}
\put(23,4){\circle*{1.5}}
\put(8,11){\circle*{1.5}}
\put(28,17){\circle*{1.5}}

}

\put(-15,0){\makebox(40,30){\usebox{\polytope}}}

\end{picture}
\end{center}
\caption{Solutions in $\mathbb{R}^{3}$ when all $w_{i}=0$.}\label{F-Chebyshev2}
\end{figure}

Fig.~\ref{F-Chebyshev3} illustrates the solution of a problem with arbitrary constants $w_{i}$. First we present solution to an auxiliary problem obtained from the initial problem by setting $w_{i}=0$ for all $i$ (top picture). To get solution in the case of nonzero constants $w_{i}$, we replace each given point with two new points. Furthermore, the minimal rectangle is built for the new points and then the solution is derived in the same way as above (bottom picture).
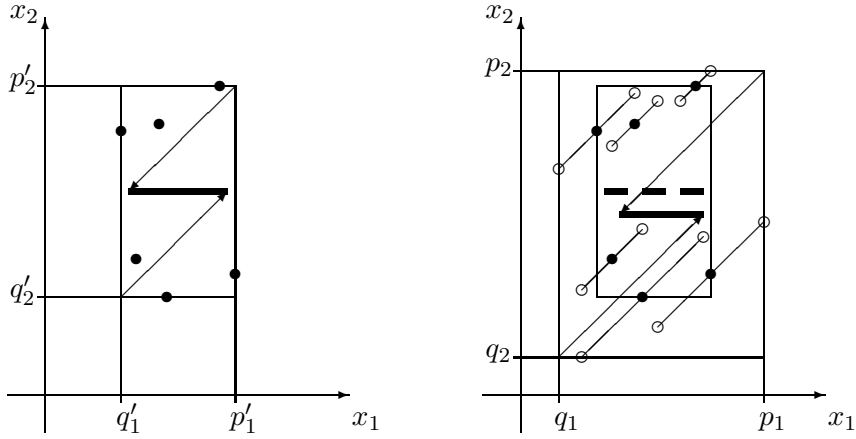
\begin{figure}[ht]
\setlength{\unitlength}{1mm}
\begin{center}

\begin{picture}(50,60)

\put(0,5){\vector(1,0){45}}
\put(5,0){\vector(0,1){55}}

\put(15,46){\line(0,-1){42}}
\put(30,46){\line(0,-1){42}}

\put(30,18){\line(-1,0){26}}
\put(30,46){\line(-1,0){26}}

\put(15,18){\vector(1,1){14}}

\put(30,46){\vector(-1,-1){14}}

\put(16,32){\linethickness{2pt}\line(1,0){13}}

\put(17,23){\circle*{1.5}}

\put(20,41){\circle*{1.5}}

\put(28,46){\circle*{1.5}}

\put(15,40){\circle*{1.5}}

\put(30,21){\circle*{1.5}}

\put(21,18){\circle*{1.5}}

\put(14,1){$q_{1}^{\prime}$}
\put(29,1){$p_{1}^{\prime}$}

\put(45,1){$x_{1}$}

\put(0,46){$p_{2}^{\prime}$}
\put(0,18){$q_{2}^{\prime}$}

\put(0,55){$x_{2}$}

\end{picture}
\hspace{10\unitlength}
\begin{picture}(50,60)

\put(0,5){\vector(1,0){45}}
\put(5,0){\vector(0,1){55}}

\put(15,46){\line(0,-1){28}}
\put(30,46){\line(0,-1){28}}

\put(30,18){\line(-1,0){15}}
\put(30,46){\line(-1,0){15}}

\multiput(16,32)(5,0){3}{\linethickness{2pt}\line(1,0){3}}

\put(10,48){\line(0,-1){44}}
\put(37,48){\line(0,-1){44}}

\put(37,10){\line(-1,0){33}}
\put(37,48){\line(-1,0){33}}

\put(10,10){\vector(1,1){19}}

\put(37,48){\vector(-1,-1){19}}

\put(18,29){\linethickness{2pt}\line(1,0){11}}

\put(17,23){\circle*{1.5}}
\put(17,23){\line(1,1){4}}
\put(21,27){\circle{1.5}}
\put(17,23){\line(-1,-1){4}}
\put(13,19){\circle{1.5}}

\put(20,41){\circle*{1.5}}
\put(17,38){\line(1,1){6}}
\put(23,44){\circle{1.5}}
\put(17,38){\circle{1.5}}

\put(28,46){\circle*{1.5}}
\put(26,44){\line(1,1){4}}
\put(30,48){\circle{1.5}}
\put(26,44){\circle{1.5}}

\put(15,40){\circle*{1.5}}
\put(15,40){\line(1,1){5}}
\put(20,45){\circle{1.5}}
\put(15,40){\line(-1,-1){5}}
\put(10,35){\circle{1.5}}

\put(30,21){\circle*{1.5}}
\put(30,21){\line(1,1){7}}
\put(37,28){\circle{1.5}}
\put(30,21){\line(-1,-1){7}}
\put(23,14){\circle{1.5}}

\put(21,18){\circle*{1.5}}
\put(21,18){\line(1,1){8}}
\put(29,26){\circle{1.5}}
\put(21,18){\line(-1,-1){8}}
\put(13,10){\circle{1.5}}

\put(9,1){$q_{1}$}
\put(36,1){$p_{1}$}

\put(45,1){$x_{1}$}

\put(0,48){$p_{2}$}
\put(0,10){$q_{2}$}

\put(0,55){$x_{2}$}

\end{picture}
\end{center}
\caption{Solution to a problem with nonzero $w_{i}$.}\label{F-Chebyshev3}
\end{figure}

\section{Constrained Location Problems}

Suppose that there is a set $S\in\mathbb{R}^{n}$ given to specify a feasible location area in problem \eqref{P-Chebyshev} and consider the constrained problem
\begin{equation}
\min_{\bm{x}\in S}\max_{1\leq i\leq m}(\rho(\bm{r}_{i},\bm{x})+w_{i}).
\label{P-ChebyshevConstrained}
\end{equation}

Representation in terms of the semifield $\mathbb{R}_{\max,+}$ leads to the problem
\begin{equation}
\min_{\bm{x}\in S}\bigoplus_{i=1}^{m}w_{i}\rho(\bm{r}_{i},\bm{x}).
\label{P-ChebyshevConstrained1}
\end{equation}

To solve the last problem we put it in the form of \eqref{P-Chebyshev1} by including the area constraints into the objective function of a normalized unconstrained problem.

\subsection{A Normalized Problem}
First, problem \eqref{P-Chebyshev1} is transformed into a normalized form to enable subsequent accommodation of the constraints in a natural way. We introduce new notation with a subscript
$$
\bm{p}_{0}
=
w_{1}\bm{r}_{1}\oplus\cdots\oplus w_{m}\bm{r}_{m},
\quad
\bm{q}_{0}^{-}
=
w_{1}\bm{r}_{1}^{-}\oplus\cdots\oplus w_{m}\bm{r}_{m}^{-},
\quad
\lambda_{0}
=
(\bm{q}_{0}^{-}\bm{p}_{0})^{1/2},
$$
and then define a normalized objective function
$$
\varphi_{0}(\bm{x})
=
\lambda_{0}^{-1}(\bm{x}^{-}\bm{p}_{0}\oplus\bm{q}_{0}^{-}\bm{x}).
$$

Instead of problem \eqref{P-Chebyshev1}, we consider the problem
$$
\min_{\bm{x}\in\mathbb{R}^{n}}\varphi_{0}(\bm{x}).
$$

It follows from Lemma~\ref{L-Chebyshev} that the normalized problem has its minimum equal to $\mathbb{1}=0$, whereas its solution set obviously coincides with that of \eqref{P-Chebyshev1}.

\subsection{Maximum Distance Constraints}

Suppose that there are constraints imposed on the maximum Chebyshev distance from the facility location point to each given points. The constraints determine the feasible location set in the form
$$
S
=
\{\bm{x}\in\mathbb{R}^{n}|\rho(\bm{r}_{i},\bm{x})\leq d_{i}, i=1,\ldots,m\}.
$$

For each $i=1,\ldots,m$, the inequality
$$
\bm{x}^{-}\bm{r}_{i}\oplus\bm{r}_{i}^{-}\bm{x}=\rho(\bm{r}_{i},\bm{x})\leq d_{i}
$$
can be rewritten in an equivalent form as
$$
d_{i}^{-1}\bm{x}^{-}\bm{r}_{i}\oplus d_{i}^{-1}\bm{r}_{i}^{-}\bm{x}\leq\mathbb{1}.
$$

With the notation
$$
\bm{p}_{1}
=
d_{1}^{-1}\bm{r}_{1}\oplus\cdots\oplus d_{m}^{-1}\bm{r}_{m},
\qquad
\bm{q}_{1}^{-}
=
d_{1}^{-1}\bm{r}_{1}^{-}\oplus\cdots\oplus d_{m}^{-1}\bm{r}_{m}^{-},
$$
all constraints are replaced with one inequality
$$
\bm{x}^{-}\bm{p}_{1}
\oplus
\bm{q}_{1}^{-}\bm{x}
\leq
\mathbb{1}.
$$

Furthermore, we introduce a function
$$
\varphi_{1}(\bm{x})
=
\bm{x}^{-}\bm{p}_{1}\oplus\bm{q}_{1}^{-}\bm{x}
$$
and note that $\varphi_{1}(\bm{x})\leq\mathbb{1}$ if and only if the maximum distance constraints are satisfied.

Finally, we put
$$
\bm{p}
=
\lambda_{0}^{-1}\bm{p}_{0}\oplus\bm{p}_{1},
\qquad
\bm{q}^{-}
=
\lambda_{0}^{-1}\bm{q}_{0}^{-}\oplus\bm{q}_{1}^{-},
$$
and define the objective function
$$
\varphi(\bm{x})
=
\varphi_{0}(\bm{x})
\oplus
\varphi_{1}(\bm{x})
=
\bm{x}^{-}\bm{p}\oplus\bm{q}^{-}\bm{x}.
$$

Now we can replace problem \eqref{P-ChebyshevConstrained1} by an unconstrained problem that has the form of problem \eqref{P-Chebyshev1} where the objective function $\varphi(\bm{x})$ is defined as above. It is clear that both problems give the same solution set provided that the solution of the unconstrained problem has nonempty intersection with the feasible set. At the same time, the new problem allows one to get approximate solutions in the case when the intersection is empty.

Based on the results offered by Lemma~\ref{L-Chebyshev}, we can give a solution to the problem under the maximum distance constraints in the following form.

\begin{lemma}\label{L-ChebyshevConstrained}
Suppose that $\bm{p}=(p_{i})$ and $\bm{q}=(q_{i})$ are vectors such that
$$
\bm{p}
=
\lambda_{0}^{-1}\bm{p}_{0}\oplus\bm{p}_{1},
\qquad
\bm{q}^{-}
=
\lambda_{0}^{-1}\bm{q}_{0}^{-}\oplus\bm{q}_{1}^{-},
$$
where
$$
\bm{p}_{0}
=
w_{1}\bm{r}_{1}\oplus\cdots\oplus w_{m}\bm{r}_{m},
\quad
\bm{q}_{0}^{-}
=
w_{1}\bm{r}_{1}^{-}\oplus\cdots\oplus w_{m}\bm{r}_{m}^{-},
\quad
\lambda_{0}
=
(\bm{q}_{0}^{-}\bm{p}_{0})^{1/2},
$$
and
$$
\bm{p}_{1}
=
d_{1}^{-1}\bm{r}_{1}\oplus\cdots\oplus d_{m}^{-1}\bm{r}_{m},
\qquad
\bm{q}_{1}^{-}
=
d_{1}^{-1}\bm{r}_{1}^{-}\oplus\cdots\oplus d_{m}^{-1}\bm{r}_{m}^{-}.
$$

Then the minimum in problem \eqref{P-ChebyshevConstrained1} is given by
$$
\lambda=(\bm{q}^{-}\bm{p})^{1/2},
$$
and it is attained at the vector
$$
\bm{x}
=
\left(
\begin{array}{c}
\lambda^{1-2\alpha_{1}}p_{1}^{\alpha_{1}}q_{1}^{1-\alpha_{1}}
\\
\vdots
\\
\lambda^{1-2\alpha_{n}}p_{n}^{\alpha_{n}}q_{n}^{1-\alpha_{n}}
\end{array}
\right)
$$
for all $\alpha_{i}$ such that $0\leq\alpha_{i}\leq1$, $i=1,\ldots,n$.
\end{lemma}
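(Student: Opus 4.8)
The plan is to observe that the heavy lifting has already been done in the preceding subsection, where the constrained problem is reformulated as an unconstrained one of exactly the shape solved by Lemma~\ref{L-Chebyshev}, and then to invoke that lemma directly. First I would note that the combined objective function
$$
\varphi(\bm{x})
=
\varphi_{0}(\bm{x})\oplus\varphi_{1}(\bm{x})
=
\bm{x}^{-}\bm{p}\oplus\bm{q}^{-}\bm{x},
$$
assembled from the vectors $\bm{p}=\lambda_{0}^{-1}\bm{p}_{0}\oplus\bm{p}_{1}$ and $\bm{q}^{-}=\lambda_{0}^{-1}\bm{q}_{0}^{-}\oplus\bm{q}_{1}^{-}$, has precisely the structure of the objective in the unconstrained problem \eqref{P-Chebyshev1}, differing only in that the single pair $\bm{p},\bm{q}$ now carries the combined normalized-plus-constraint data. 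Consequently, Lemma~\ref{L-Chebyshev} applies verbatim with these substituted vectors and yields at once that $\min_{\bm{x}\in\mathbb{R}^{n}}\varphi(\bm{x})=\lambda=(\bm{q}^{-}\bm{p})^{1/2}$, attained along the parametric family $\bm{x}=(\lambda^{1-2\alpha_{i}}p_{i}^{\alpha_{i}}q_{i}^{1-\alpha_{i}})$ with $0\leq\alpha_{i}\leq1$. This is the entire algebraic content: the formulas for $\lambda$ and the minimizing vectors are a mechanical specialization of the earlier eigenvalue/eigenvector computation.

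The remaining task is to confirm that solving the reformulated unconstrained problem genuinely solves the constrained problem \eqref{P-ChebyshevConstrained1}. Here I would use the two facts established just before the statement: that $\varphi_{1}(\bm{x})\leq\mathbb{1}$ holds if and only if $\bm{x}\in S$, and that the normalized function satisfies $\varphi_{0}(\bm{x})\geq\mathbb{1}$ everywhere, with equality exactly on the solution set of \eqref{P-Chebyshev1}. For any feasible $\bm{x}\in S$ one then has $\varphi_{1}(\bm{x})\leq\mathbb{1}\leq\varphi_{0}(\bm{x})$, so that $\varphi(\bm{x})=\varphi_{0}(\bm{x})$, and minimizing the surrogate $\varphi$ over $S$ coincides with minimizing the normalized (equivalently the original) objective over $S$. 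Thus, once the displayed minimizer of the unconstrained combined problem is seen to lie in $S$, it simultaneously furnishes the solution set of \eqref{P-ChebyshevConstrained1}, and the stated formulas carry over.

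I expect the main obstacle to be this feasibility and equivalence step, not the algebra. The value $\lambda$ and the minimizing vectors drop out of Lemma~\ref{L-Chebyshev} immediately; what genuinely needs care is that the unconstrained optimum of $\varphi$ falls inside the feasible set $S$, which the text flags as guaranteed only when the solution of the unconstrained problem has nonempty intersection with $S$. I would therefore state explicitly that, in the nonempty-intersection regime, the reported $\lambda$ and $\bm{x}$ give the exact minimum of the reformulated objective and the true optimal locations, whereas if that intersection is empty the same construction still returns a well-defined vector that must be read as an \emph{approximate} solution rather than an exact minimizer of \eqref{P-ChebyshevConstrained1}. Keeping this caveat visible prevents the equivalence claim from being overextended while leaving the closed-form conclusion intact.
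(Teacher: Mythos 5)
Your proposal matches the paper's own treatment: the paper gives no separate proof of this lemma, obtaining it exactly as you do by substituting the combined vectors $\bm{p}=\lambda_{0}^{-1}\bm{p}_{0}\oplus\bm{p}_{1}$ and $\bm{q}^{-}=\lambda_{0}^{-1}\bm{q}_{0}^{-}\oplus\bm{q}_{1}^{-}$ into Lemma~\ref{L-Chebyshev} applied to the reformulated unconstrained problem. Your explicit equivalence argument (that $\varphi(\bm{x})=\varphi_{0}(\bm{x})$ for $\bm{x}\in S$ since $\varphi_{1}(\bm{x})\leq\mathbb{1}\leq\varphi_{0}(\bm{x})$, together with the caveat that the construction yields only an approximate solution when the unconstrained solution set misses $S$) is precisely the qualification the paper itself states in the text preceding the lemma, so your account is correct and, if anything, slightly more careful on that point.
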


Going back to the usual notation, we arrive at the following result.
\begin{corollary}
Suppose that for each $i=1,\ldots,n$
$$
p_{i}
=
\max(p_{0i}-\lambda_{0},p_{1i}),
\qquad
q_{i}
=
\min(q_{0i}+\lambda_{0},q_{1i}),
$$
where
\begin{align*}
p_{0i}
&=
\max(r_{i1}+w_{1},\ldots,r_{im}+w_{m}),
\\
q_{0i}
&=
\min(r_{i1}-w_{1},\ldots,r_{im}-w_{m}),
\\
p_{1i}
&=
\max(r_{i1}-d_{1},\ldots,r_{im}-d_{m}),
\\
q_{1i}
&=
\min(r_{i1}+d_{1},\ldots,r_{im}+d_{m}),
\end{align*}
and
$$
\lambda_{0}
=
\max(p_{01}-q_{01},\ldots,p_{0n}-q_{0n})/2.
$$

Then the minimum in \eqref{P-ChebyshevConstrained1} is given by
$$
\lambda=\max(p_{1}-q_{1},\ldots,p_{n}-q_{n})/2,
$$
and it is attained at the vector
$$
\bm{x}
=
\left(
\begin{array}{c}
\alpha_{1}(p_{1}-\lambda)
\\
\vdots
\\
\alpha_{n}(p_{n}-\lambda)
\end{array}
\right)
+
\left(
\begin{array}{c}
(1-\alpha_{1})(q_{1}+\lambda)
\\
\vdots
\\
(1-\alpha_{n})(q_{n}+\lambda)
\end{array}
\right)
$$
for all $\alpha_{i}$ such that $0\leq\alpha_{i}\leq1$, $i=1,\ldots,n$.
\end{corollary}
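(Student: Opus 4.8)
The plan is to obtain the Corollary directly from Lemma~\ref{L-ChebyshevConstrained} by translating every expression from the idempotent semifield $\mathbb{R}_{\max,+}$ into conventional arithmetic. Recall from the preliminary section that in $\mathbb{R}_{\max,+}$ one has $\oplus=\max$, multiplication equal to ordinary $+$, the inverse $x^{-1}=-x$, and the power $x^{\beta}$ equal to the arithmetic product $\beta x$; in particular $x^{1/2}$ becomes $x/2$. The whole argument is thus a bookkeeping exercise in which the only real care is needed when passing between a vector $\bm{q}$ and its inverse row vector $\bm{q}^{-}$, since negation turns a $\max$ into a $\min$.

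First I would compute the scalar components of the four building-block vectors. For $\bm{p}_{0}$ and $\bm{p}_{1}$ the translation is immediate: the $i$-th component of $w_{1}\bm{r}_{1}\oplus\cdots\oplus w_{m}\bm{r}_{m}$ is $\max_{j}(w_{j}+r_{ij})$, giving the stated $p_{0i}$, and likewise $p_{1i}=\max_{j}(r_{ij}-d_{j})$. For the inverse vectors I would use that the $i$-th entry of $\bm{q}_{0}^{-}$ equals $\max_{j}(w_{j}-r_{ij})$, so that $q_{0i}=-\max_{j}(w_{j}-r_{ij})=\min_{j}(r_{ij}-w_{j})$, and similarly $q_{1i}=\min_{j}(r_{ij}+d_{j})$; this is the step where the $\min$ in the Corollary appears.

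Next I would treat the scalars and the combined vectors. Since $\bm{q}_{0}^{-}\bm{p}_{0}=\max_{i}(-q_{0i}+p_{0i})=\max_{i}(p_{0i}-q_{0i})$, the $1/2$-power yields $\lambda_{0}=\tfrac{1}{2}\max_{i}(p_{0i}-q_{0i})$ as claimed. The $i$-th component of $\bm{p}=\lambda_{0}^{-1}\bm{p}_{0}\oplus\bm{p}_{1}$ is then $\max(p_{0i}-\lambda_{0},p_{1i})$; for $\bm{q}$ I again pass through the inverse, obtaining $q_{i}=\min(q_{0i}+\lambda_{0},q_{1i})$. The identical computation applied to $\bm{q}^{-}\bm{p}=\max_{i}(p_{i}-q_{i})$ gives $\lambda=\tfrac{1}{2}\max_{i}(p_{i}-q_{i})$.

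Finally, for the minimizer I would translate the component $x_{i}=\lambda^{1-2\alpha_{i}}p_{i}^{\alpha_{i}}q_{i}^{1-\alpha_{i}}$ of Lemma~\ref{L-ChebyshevConstrained}: powers become products and the idempotent product becomes a sum, so $x_{i}=(1-2\alpha_{i})\lambda+\alpha_{i}p_{i}+(1-\alpha_{i})q_{i}$. Splitting $\lambda(1-2\alpha_{i})=-\alpha_{i}\lambda+(1-\alpha_{i})\lambda$ rewrites this as $\alpha_{i}(p_{i}-\lambda)+(1-\alpha_{i})(q_{i}+\lambda)$, which is exactly the vector sum in the statement. The only point demanding attention, as noted above, is the consistent use of $q^{-}=-q$ when extracting $q_{0i},q_{1i},q_{i}$ from their inverse-vector definitions; every other line is a direct substitution of $\max$, $+$, and halving for the idempotent operations, so I do not expect any genuine obstacle beyond this sign-tracking.
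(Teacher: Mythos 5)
Your proposal is correct and coincides with the paper's own (implicit) argument: the corollary is obtained from Lemma~\ref{L-ChebyshevConstrained} purely by ``going back to the usual notation,'' i.e.\ replacing $\oplus$ by $\max$, the idempotent product by $+$, powers by scalar multiples, and tracking the sign flip that turns the $\max$ in $\bm{q}^{-}$, $\bm{q}_{0}^{-}$, $\bm{q}_{1}^{-}$ into the stated $\min$ expressions. Your component-by-component translation, including the final rewriting of $\lambda^{1-2\alpha_{i}}p_{i}^{\alpha_{i}}q_{i}^{1-\alpha_{i}}$ as $\alpha_{i}(p_{i}-\lambda)+(1-\alpha_{i})(q_{i}+\lambda)$, is exactly the intended proof.
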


Fig.~\ref{F-ChebyshevConstrained1} gives an example of solution to a problem with maximum distance constraints in $\mathbb{R}^{2}$. The entire thick line segment represents the solution of the corresponding unconstrained problem, whereas the part of the segment inside the inner rectangle indicates the solution of the constrained problem.
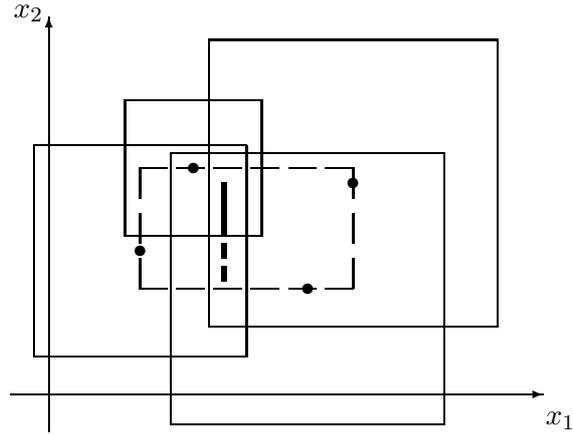
\begin{figure}[ht]
\setlength{\unitlength}{1mm}
\begin{center}

\begin{picture}(75,60)

\put(0,5){\vector(1,0){70}}
\put(5,0){\vector(0,1){55}}

\multiput(45,19)(-4.9,0){6}{\line(-1,0){3.6}}

\multiput(45,35)(-4.9,0){6}{\line(-1,0){3.6}}

\multiput(17,35)(0,-6){3}{\line(0,-1){4.0}}

\multiput(45,35)(0,-6){3}{\line(0,-1){4.0}}





\put(28,26){\linethickness{2pt}\line(0,1){7}}

\multiput(28,20)(0,3){2}{\linethickness{2pt}\line(0,1){2}}



\put(45,33){\circle*{1.5}}
\put(26,14){\line(1,0){38}}
\put(26,52){\line(1,0){38}}
\put(26,14){\line(0,1){38}}
\put(64,14){\line(0,1){38}}

\put(39,19){\circle*{1.5}}
\put(21,1){\line(1,0){36}}
\put(21,37){\line(1,0){36}}
\put(21,1){\line(0,1){36}}
\put(57,1){\line(0,1){36}}

\put(24,35){\circle*{1.5}}
\put(15,26){\line(1,0){18}}
\put(15,44){\line(1,0){18}}
\put(15,26){\line(0,1){18}}
\put(33,26){\line(0,1){18}}

\put(17,24){\circle*{1.5}}
\put(3,10){\line(1,0){28}}
\put(3,38){\line(1,0){28}}
\put(3,10){\line(0,1){28}}
\put(31,10){\line(0,1){28}}


\put(70,1){$x_{1}$}


\put(0,55){$x_{2}$}

\end{picture}
\end{center}
\caption{Solution to a constrained problem.}\label{F-ChebyshevConstrained1}
\end{figure}

\subsection*{Acknowledgments}
The work was partially supported by the Russian Foundation for Basic Research, Grant \#09-01-00808.

\bibliographystyle{utphys}

\bibliography{An_algebraic_approach_to_multidimensional_minimax_location_problems_with_Chebyshev_distance}

\end{document}